\documentclass{amsart}

\issueinfo{61}{1}{January}{2024}
\dateposted{October 6, 2023}

\pagespan{3}{22}
\copyrightinfo{2023}{by Henry Cohn}
\PII{\url{https://doi.org/10.1090/bull/1813}}

\usepackage[pdftex,colorlinks,citecolor=black,linkcolor=black,urlcolor=black,bookmarks=false]{hyperref}

\usepackage[msc-links,nobysame]{amsrefs}

\hyphenation{com-plete-ly}
\hyphenation{when-ev-er}
\hyphenation{growth}
\hyphenation{spheres}
\hyphenation{math-e-mat-ics}
\hyphenation{van-ishes}
\hyphenation{con-tin-u-a-tion}

\usepackage{tikz}
\usepackage{pgfplots}
\pgfplotsset{compat=1.18}

\newcommand{\arXiv}[1]{\texttt{arXiv:\href{https://arXiv.org/abs/#1}{#1}}}

\newcommand{\R}{{\mathbb R}}

\newcommand{\Z}{{\mathbb Z}}
\newcommand{\C}{{\mathbb C}}
\newcommand{\Hyp}{{\mathbb H}}
\newcommand{\vol}{\mathop{\textup{vol}}}
\renewcommand{\Im}{\mathop{\textup{Im}}}

\newcommand{\supp}{\mathop{\textup{supp}}}

\newtheorem{theorem}{Theorem}[section]
\newtheorem{lemma}[theorem]{Lemma}
\newtheorem{proposition}[theorem]{Proposition}

\theoremstyle{definition}

\theoremstyle{remark}

\numberwithin{equation}{section}
\numberwithin{figure}{section}

\begin{document}

\title{From sphere packing to Fourier interpolation}

\author{Henry Cohn}
\address{Microsoft Research New England, One Memorial Drive, Cambridge, Massachusetts
02142}
\email{cohn@microsoft.com}

\subjclass[2020]{Primary 52C17, 42A15}

\date{July 17, 2023}

\begin{abstract}
Viazovska's solution of the sphere packing problem in eight dimensions is
based on a remarkable construction of certain special functions using
modular forms. Great mathematics has consequences far beyond the problems
that originally inspired it, and Viazovska's work is no exception. In this
article, we'll examine how it has led to new interpolation theorems in
Fourier analysis, specifically a theorem of Radchenko and Viazovska.
\end{abstract}

\maketitle

\section{Sphere packing}

The sphere packing problem asks how densely congruent spheres can be packed
in Euclidean space. In other words, what fraction of space can be filled with
congruent balls, if their interiors are required to be disjoint?\footnote{To
make this question precise, we could take the limit as $r \to \infty$ of
the density for packing unit spheres in a sphere of radius $r$, or a cube of
side length $r$. We would obtain the same limit for any reasonable container
(see, for example, \cite{BHS}).} Everyone can pack spheres intuitively in low
dimensions: the optimal two-dimensional packing is a hexagonal arrangement,
and optimal three-dimensional packings are stacks of optimal two-dimensional
layers, nestled together as closely as possible into the gaps in the layers
(see Figure~\ref{figure:packing}).

In fact, these packings are known to be optimal. The two-dimensional problem
was solved by Thue \cites{Thu1,Thu2}, with a more modern proof by Fejes T\'oth
\cite{F}, and the three-dimensional problem was solved by Hales \cite{H}. The
two-dimensional proof is not so complicated, but the three-dimensional proof
is difficult to check, because it relies on both enormous machine
calculations and lengthy human arguments in a sequence of papers. To give a definitive
demonstration of its correctness, Hales and a team of collaborators have
produced a formally verified proof \cite{Hplus}, i.e., a proof that has been
algorithmically verified using formal logic.

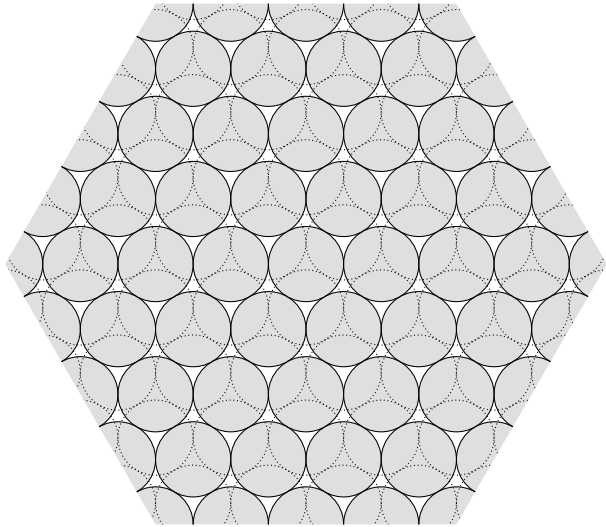
\begin{figure}
\centering
\begin{tikzpicture}[scale=0.5]
\clip (-8,0)--({-8+12/3},{12/3*sqrt(3)})--({8-12/3},{12/3*sqrt(3)})--(8,0)--({8-12/3},{-12/3*sqrt(3)})--({-8+12/3},{-12/3*sqrt(3)})--cycle;
\foreach \i in {-10,...,10}
\foreach \j in {-10,...,10}
{
\fill[white!87.5!black] ({2*\i+\j},{sqrt(3)*\j}) circle (1);
}
\foreach \i in {-10,...,10}
\foreach \j in {-10,...,10}
{
\draw  ({2*\i+\j},{sqrt(3)*\j}) circle (1);
\draw[densely dotted]  ({2*\i+\j+1},{sqrt(3)*\j+sqrt(3)/3}) circle (1);
}
\end{tikzpicture}
\caption{A two-dimensional cross section of an optimal three-dimensional sphere packing, with dotted lines indicating spheres in an adjacent layer.}
\label{figure:packing}
\end{figure}

On the one hand, the solution of the three-dimensional sphere packing problem
is a triumph of modern mathematics, a demonstration of humanity's ability to
overcome even tremendously challenging obstacles. On the other hand, to a
general audience it can sound like a parody of pure mathematics, in which
mathematicians devote immense efforts to proving an intuitively obvious
assertion. It's natural to feel discouraged about the future of a subfield in
which it's easy to guess the answer and almost impossible to prove it. For
comparison, a rigorous solution of the four-dimensional sphere packing
problem remains far out of reach. If the difficulty increases as much from
three to four dimensions as it did from two to three, then humanity may never
see a proof.

One noteworthy change as we move to higher dimensions is that we lose much of
our intuition, and the answer is no longer easy to guess. For example, it is not always true that we can obtain an
optimal packing in $\R^n$ by stacking optimal $(n-1)$-dimensional layers (see
\cite{CS} for details). In sufficiently high dimensions, there are no
conjectures for optimal packings, the best upper and lower bounds known for
the packing density differ by an exponential factor in the dimension, and we
cannot even predict whether the densest packings should be crystalline or
disordered. In short, we know shockingly little about how spherical particles
behave in high dimensions. Of course this means there are plenty of
intriguing phenomena to explore.

Certain dimensions stand out in the midst of this ignorance as having
exceptionally dense packings. The most amazing of all are eight and
twenty-four dimensions, which feature the $E_8$ root lattice and the Leech
lattice $\Lambda_{24}$. (We will not construct these lattices here; see
\cites{E, Tho, SPLAG} for constructions.) Recall that a lattice in $\R^n$ is
just a discrete subgroup of rank $n$; in other words, for each basis
$v_1$, \dots, $v_n$ of $\R^n$, the set
\[
\{ a_1 v_1 + \dots + a_n v_n : a_1,\dots,a_n \in \Z\}
\]
is a lattice. Every lattice leads to a sphere packing by centering congruent
spheres at the lattice points, with the radius chosen as large as possible
without overlap. Lattice packings are common in low dimensions, but
there is no reason to expect an optimal packing to have this sort of algebraic
structure in general. For example, in $\R^{10}$ the best packing
known, the aptly named Best packing \cite{B}, has density more than 8\%
greater than any known lattice packing in $\R^{10}$. By contrast, the $E_8$
and Leech lattices yield
impressively dense packings with extraordinary symmetry groups, and their
density and symmetry are so far out of the ordinary that it is difficult to
imagine how they could be improved.

In 2016 Maryna Viazovska \cite{V} solved the sphere packing problem in $\R^8$
with an innovative use of modular forms, which was soon extended to $\R^{24}$
as well~\cite{CKMRV1}; both $E_8$ and the Leech lattice do indeed turn out to
be optimal sphere packings. These are the only cases in which the sphere
packing problem has been solved above three dimensions. Although the proofs
require more machinery than those in two or three dimensions, most notably
the theory of modular forms, they are much shorter and simpler than one might
fear. Viazovska's proof dispelled the gloomy possibility that
higher-dimensional sphere packing could be beyond human understanding, and
she was awarded a Fields Medal in 2022 for this line of work.

In addition to her breakthrough in sphere packing, Viazovska's modular form
techniques have led to unexpected consequences, such as interpolation
theorems showing that a radial function $f$ can be reconstructed from the
values of $f$ and its Fourier transform $\widehat{f}$ on certain discrete
sets of points. Although Fourier interpolation may sound rather far afield
from sphere packing, it turns out to be closely connected. In this article,
we'll explore how Viazovska's work led to this connection and how to prove
a fundamental interpolation theorem of Radchenko and
Viazovska \cite{RV}. For comparison, \cite{dLV}, \cite{C1}, \cite{V2},
\cite{V3}, and \cite{C2} are expositions of her work that focus on other
themes.

$\phantom{}$

\section{From sphere packing to Fourier analysis}

The connection between packing problems and Fourier analysis originated in the
work of Delsarte \cite{D} on linear programming bounds for error-correcting
codes. For sphere packings in Euclidean space, a continuous analogue of
Delsarte's work was developed by Cohn and Elkies \cite{CE}. The quality of
this bound depends on the choice of an auxiliary function satisfying certain
inequalities, and Viazovska's breakthrough amounted to figuring out how to
optimize that choice.

We will normalize the Fourier transform of an integrable function $f \colon
\R^n \to \C$ by
\[
\widehat{f}(y) = \int_{\R^n} f(x) e^{-2\pi i \langle x,y \rangle}\, dx,
\]
where $\langle \cdot, \cdot \rangle$ denotes the usual inner product on
$\R^n$. We'll generally restrict our attention to Schwartz functions, i.e.,
infinitely differentiable functions $f$ such that for all real numbers $c>0$
and nonnegative integers $i_1,\dots,i_n$,
\[
\left| \frac{\partial^{i_1+\dots+i_n}}{\partial x_1^{i_1} \cdots\partial x_n^{i_n} } f(x_1,\dots,x_n) \right| = O(|x|^{-c})
\]
as $|x| \to \infty$. These smoothness and decay conditions can be somewhat
weakened in each application below, but Schwartz functions are the
best-behaved case. We'll also frequently study radial functions, i.e.,
functions $f$ for which $f(x)$ depends only on $|x|$, in which case we will
write $f(r)$ for $r \in [0,\infty)$ to denote the value $f(x)$ with $|x|=r$
and $f'$ for the radial derivative of $f$. Note that the spaces of radial
functions and of Schwartz functions are both preserved by the Fourier
transform.

The linear programming bound is the following method for producing a density
bound from a suitable auxiliary function $f$. The name ``linear programming
bound'' refers to the fact that optimizing this bound can be recast as an
infinite-dimensional linear programming problem (i.e., linear optimization
problem).

\begin{theorem}[Cohn and Elkies \cite{CE}] \label{thm:lpbound}
Let $f \colon \R^n \to \R$ be a radial Schwartz function and $r$ a positive
real number such that
\begin{enumerate}
\item $f(x) \le 0$ whenever $|x| \ge r$,

\item $\widehat{f}(y) \ge 0$ for all $y$, and

\item $f(0) = \widehat{f}(0)=1$.
\end{enumerate}
Then the optimal sphere packing density in $\R^n$ is at most the volume
$\vol(B_{r/2}^n)$ of a ball of radius $r/2$ in $\R^n$.
\end{theorem}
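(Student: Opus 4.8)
The plan is to deduce the bound from the Poisson summation formula, after first reducing to periodic packings. Recall that the optimal packing density in $\R^n$ is a supremum that is approached by \emph{periodic} packings, whose center sets are a finite union of cosets $v_1 + \Lambda, \dots, v_N + \Lambda$ of some lattice $\Lambda$, with the $v_j$ in distinct cosets (given any packing, restrict it to a large cube and repeat that cube periodically, losing a negligible fraction of the density as the cube grows). Since density is scale invariant, I may assume the spheres have radius $r/2$, so that disjointness of interiors means any two distinct centers are at distance at least $r$. A periodic packing of this form has density $N\vol(B_{r/2}^n)/\vol(\R^n/\Lambda)$, so the theorem reduces to the purely arithmetic inequality $N \le \vol(\R^n/\Lambda)$.

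To see the idea in its simplest form, I would first treat the lattice case $N = 1$: here $\Lambda$ has all nonzero vectors of length $\ge r$, and Poisson summation gives
\[
\sum_{x\in\Lambda} f(x) \;=\; \frac{1}{\vol(\R^n/\Lambda)}\sum_{y\in\Lambda^*}\widehat{f}(y),
\]
where $\Lambda^*$ is the dual lattice. Condition (1) makes every $x\ne 0$ term on the left nonpositive, so the left side is at most $f(0)=1$; condition (2) makes every term on the right nonnegative, so the right side is at least $\widehat{f}(0)/\vol(\R^n/\Lambda) = 1/\vol(\R^n/\Lambda)$ by condition (3). Hence $\vol(\R^n/\Lambda)\ge 1$.

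For general $N$ the trick is to symmetrize over pairs of cosets. I would apply Poisson summation to the inner sum in
\[
S \;=\; \sum_{j=1}^N\sum_{k=1}^N\sum_{x\in\Lambda} f(x+v_j-v_k)
\]
and collect terms to get $S = \vol(\R^n/\Lambda)^{-1}\sum_{y\in\Lambda^*}\bigl|\sum_{j} e^{2\pi i\langle v_j,y\rangle}\bigr|^2\widehat{f}(y)$, which by condition (2) is at least the $y=0$ term, namely $N^2/\vol(\R^n/\Lambda)$. On the other hand, $x+v_j-v_k$ is the difference of the two centers $x+v_j$ and $v_k$; these coincide only when $j=k$ and $x=0$, contributing $N$ diagonal terms each equal to $f(0)=1$, while in every other term the two centers are distinct, hence at distance $\ge r$, so $f(x+v_j-v_k)\le 0$ by condition (1). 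Thus $S \le N$, and comparing the two estimates yields $N \le \vol(\R^n/\Lambda)$, as needed.

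The conceptual work is all in the three displayed identities; the part requiring care is the analytic bookkeeping. I would need to justify that every sum above converges absolutely and that Poisson summation is valid, which is exactly where the Schwartz hypothesis earns its keep: $f$ and $\widehat{f}$ both decay faster than any polynomial, controlling the sums over $\Lambda$ and over $\Lambda^*$ respectively. I would also verify the rearrangement that turns the triple sum into the squared exponential sum, and spell out the (standard) approximation of arbitrary packings by periodic ones. None of this is deep, but it is what upgrades the one-line idea to a proof. Incidentally, radiality of $f$ plays no role in the argument; it is imposed only because the auxiliary functions one actually wants to construct turn out to be radial.
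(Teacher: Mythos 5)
Your proposal is correct and takes the same route as the paper: Poisson summation applied to periodic packings. The paper itself only writes out the lattice case ($N=1$) and defers the general argument to the cited references, so you have in fact supplied more detail than the exposition does, and your symmetrization $S=\sum_{j,k}\sum_{x\in\Lambda}f(x+v_j-v_k)$ with the nonnegative factor $\bigl|\sum_j e^{2\pi i\langle v_j,y\rangle}\bigr|^2$ on the dual side is exactly the mechanism used in Cohn--Elkies; your closing observation that radiality is not needed also matches a remark the paper makes right after the proof.
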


It is far from obvious how to produce good auxiliary functions $f$ for use in
this theorem, or how to optimize the choice of $f$, i.e., minimize $r$. In
fact, the exact optimum is known only for $n=1$, $8$, and $24$. However, one
can perform a numerical optimization over a suitable space of functions, such
as polynomials of fixed degree times a Gaussian, with the hope that it will
converge to the global optimum as the degree tends to infinity.
Figure~\ref{figure:plot} compares the resulting numerical bound with the
density of the best packing known.

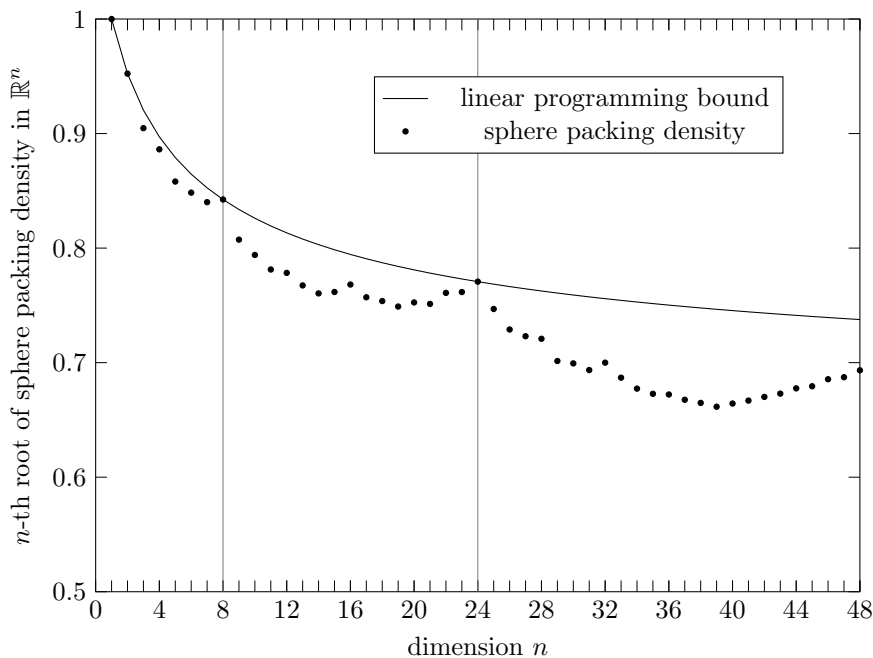
\begin{figure}
\centering
\begin{tikzpicture}
\begin{axis}[every x tick/.style={black}, every y tick/.style={black},
scale only axis,
xmin=0, xmax=48, ymin=0.5, ymax=1,
xlabel=\textup{dimension $n$},
ylabel=\textup{$n$-th root of sphere packing density in $\mathbb{R}^n$},
domain = 0:48,
enlargelimits = false,
xtick = {0,4,...,48},
extra x ticks = {1,...,48},
extra x tick labels={},
legend style={at={(0.9,0.9)},anchor=north east},
width=4in,
height=3in
]
\draw[black!50!white] (axis cs:8,0.5) -- (axis cs:8,1);
\draw[black!50!white] (axis cs:24,0.5) -- (axis cs:24,1);
\addplot +[mark=none, color=black] plot coordinates {
(1,1.0000000)
(2,0.95231281)
(3,0.92041678)
(4,0.89710713)
(5,0.87908032)
(6,0.86458190)
(7,0.85258181)
(8,0.84242944)
(9,0.83369039)
(10,0.82606176)
(11,0.81932483)
(12,0.81331708)
(13,0.80791487)
(14,0.80302217)
(15,0.79856310)
(16,0.79447678)
(17,0.79071375)
(18,0.78723327)
(19,0.78400151)
(20,0.78099005)
(21,0.77817486)
(22,0.77553542)
(23,0.77305413)
(24,0.77071574)
(25,0.76850702)
(26,0.76641637)
(27,0.76443363)
(28,0.76254980)
(29,0.76075692)
(30,0.75904788)
(31,0.75741635)
(32,0.75585664)
(33,0.75436362)
(34,0.75293269)
(35,0.75155964)
(36,0.75024070)
(37,0.74897239)
(38,0.74775156)
(39,0.74657532)
(40,0.74544104)
(41,0.74434628)
(42,0.74328880)
(43,0.74226653)
(44,0.74127758)
(45,0.74032017)
(46,0.73939267)
(47,0.73849356)
(48,0.73762142)
};
\addplot +[mark=*, only marks, mark options={fill=black,scale=0.5},color=black] plot
coordinates {
(1,1.0000000)
(2,0.95231281)
(3,0.90469990)
(4,0.88622693)
(5,0.85810157)
(6,0.84841426)
(7,0.84008437)
(8,0.84242944)
(9,0.80737603)
(10,0.79402251)
(11,0.78131821)
(12,0.77836551)
(13,0.76740823)
(14,0.76044358)
(15,0.76170417)
(16,0.76819664)
(17,0.75704214)
(18,0.75375437)
(19,0.74898436)
(20,0.75255685)
(21,0.75129107)
(22,0.76091135)
(23,0.76161908)
(24,0.77071574)
(25,0.74684243)
(26,0.72894781)
(27,0.72304974)
(28,0.72085193)
(29,0.70141757)
(30,0.69934523)
(31,0.69354272)
(32,0.69999301)
(33,0.68688018)
(34,0.67733090)
(35,0.67279863)
(36,0.67218873)
(37,0.66759512)
(38,0.66486383)
(39,0.66154449)
(40,0.66430723)
(41,0.66687565)
(42,0.67008024)
(43,0.67296813)
(44,0.67754247)
(45,0.67939385)
(46,0.68550577)
(47,0.68727962)
(48,0.69332873)
};
\legend{\quad linear programming bound, \quad sphere packing density}
\end{axis}
\end{tikzpicture}
\caption{A plot of the numerically computed linear programming bound \cite{ACHLT}
and the best sphere packing density currently known \cite{SPLAG}. The plot shows
the $n$-th root of the density in dimension $n$, with $n=8$ and $n=24$ marked by
vertical lines.}
\label{figure:plot}
\end{figure}

In most dimensions, the linear programming bound seems nowhere near sharp,
but the upper and lower bounds appear to touch in eight and twenty-four
dimensions. Cohn and Elkies conjectured that they were equal in those cases,
and the solutions of the sphere packing problem in these dimensions come from
proving this conjecture.\footnote{The linear programming bound also seems to
be sharp in two dimensions, but no proof is known, despite the fact that the
two-dimensional sphere packing problem itself can be solved by elementary
means.} For comparison, it is known that the linear programming bound cannot be sharp
in dimensions three through five \cite{L}, six \cite{dCDV}, twelve, or sixteen \cite{CdLS},
and it is likely that the only sharp cases are dimensions one, two, eight, and twenty-four.

The optimal auxiliary functions in eight and twenty-four dimensions have come
to be known as magic functions, because obtaining an exact solution in these
dimensions feels like a miracle. To see how this miracle comes about, we will
examine a proof of Theorem~\ref{thm:lpbound} for the special case of lattice
packings. It is based on the Poisson summation formula, which states that
\[
\sum_{x \in \Lambda} f(x) = \frac{1}{\vol(\R^n/\Lambda)} \sum_{y \in \Lambda^*} \widehat{f}(y)
\]
for every Schwartz function $f \colon \R^n \to \C$ and lattice $\Lambda$ in
$\R^n$. In this formula, $\vol(\R^n/\Lambda)$ is the volume of the quotient
torus (i.e., the volume of a fundamental parallelotope for the lattice or,
equivalently, the absolute value of the determinant of a basis), and
$\Lambda^*$ is the dual lattice, which is spanned by the dual basis
$v_1^*$, \dots, $v_n^*$ to any basis $v_1$, \dots, $v_n$ of $\Lambda$ (i.e., $\langle
v_i^*, v_j \rangle = \delta_{i,j}$). Poisson summation expresses a
fundamental duality for Fourier analysis on $\R^n$, and we can apply it as
follows.

\begin{proof}[Proof of Theorem~\ref{thm:lpbound} for lattice packings]
Suppose our sphere packing consists of spheres centered at the points of a
lattice $\Lambda$ in $\R^n$. The sphere packing density is scaling-invariant,
and so without loss of generality we can assume that the minimal nonzero
vectors in $\Lambda$ have length $r$. In other words, the sphere packing uses
spheres of radius $r/2$, so that neighboring spheres are tangent to each
other. Then the packing density is $\vol(B_{r/2}^n)/\!\vol(\R^n/\Lambda)$,
since there is one sphere for each fundamental cell of $\Lambda$.

We now apply Poisson summation to the auxiliary function $f$, to obtain
\[
\sum_{x \in \Lambda} f(x) = \frac{1}{\vol(\R^n/\Lambda)} \sum_{y \in \Lambda^*} \widehat{f}(y).
\]
The left side of this equation is bounded above by $f(0)=1$, because $f(x)
\le 0$ whenever $|x| \ge r$, and the right side is bounded below by
$\widehat{f}(0)/\!\vol(\R^n/\Lambda) = 1/\!\vol(\R^n/\Lambda)$, since every
summand is nonnegative. Thus, we conclude that $1/\!\vol(\R^n/\Lambda) \le
1$, and the sphere packing density satisfies
$\vol(B_{r/2}^n)/\!\vol(\R^n/\Lambda) \le \vol(B_{r/2}^n)$, as desired.
\end{proof}

The proof for more general packings is similar in spirit, but it applies
Poisson summation to periodic packings given by unions of translates of a
lattice. See \cite{CE} or \cite{C1} for the details.

Note that the proof of Theorem~\ref{thm:lpbound} does not actually require
$f$ to be radial. However, the conditions on $f$ are linear and
rotation-invariant, and thus we can assume $f$ is radial without loss of
generality via rotational averaging.

What sort of function $f$ could show that a lattice $\Lambda$ is an optimal
sphere packing? The proof given above drops the terms $f(x)$ with $x \in
\Lambda \setminus \{0\}$ and $\widehat{f}(t)$ for $y \in \Lambda^*
\setminus\{0\}$. Thus, we obtain a sharp bound if and only if all these
omitted terms vanish. Because $f$ and $\widehat{f}$ are radial functions,
these conditions amount to saying that $f$ vanishes on all the nonzero vector
lengths in $\Lambda$, while $\widehat{f}$ vanishes on all the nonzero vector
lengths in $\Lambda^*$. Furthermore, $f$ and $\widehat{f}$ cannot change sign
at these roots, except for a sign change in $f$ at the minimal nonzero vector
length in $\Lambda$.

It turns out that the $E_8$ and Leech lattices are both self-dual, and their
nonzero vector lengths are simply $\sqrt{2k}$ for integers $k \ge 1$ in $E_8$
and $k \ge 2$ in $\Lambda_{24}$. Thus, we know exactly what the roots of the
magic functions should be. These roots are shown in
Figure~\ref{figure:diagram} for eight dimensions.

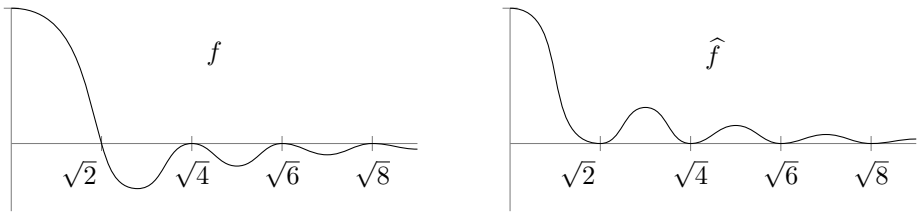
\begin{figure}
\centering
\begin{tikzpicture}[scale=1.2]
\draw [black!50] (0,-0.75) -- (0,1.5);
\draw [black!50] (-0.1/1.2,1.5) -- (0.1/1.2,1.5);
\draw [black!50] (0,0) -- (4.5,0);
\draw (2.25,1) node {$f$};
\draw [black!50] (1,-0.1/1.2) -- (1,0.1/1.2); \draw (0.75,-0.1/1.2) node[below] {$\sqrt{2}$};
\draw [black!50] (2,-0.1/1.2) -- (2,0.1/1.2); \draw (2,-0.1/1.2) node[below] {$\sqrt{4}$};
\draw [black!50] (3,-0.1/1.2) -- (3,0.1/1.2); \draw (3,-0.1/1.2) node[below] {$\sqrt{6}$};
\draw [black!50] (4,-0.1/1.2) -- (4,0.1/1.2); \draw (4,-0.1/1.2) node[below] {$\sqrt{8}$};
\draw (0,1.5) to[out=0,in=106] (1,0) to[out=286,in=180] (1.4,-0.5)
to[out=0,in=180] (2,0) to[out=0,in=180] (2.5,-0.25)
to[out=0,in=180] (3,0) to[out=0,in=180] (3.5,-0.125)
to[out=0,in=180] (4,0) to[out=0,in=180] (4.5,-0.0625);
\end{tikzpicture}
\hskip 1cm
\begin{tikzpicture}[scale=1.2]
\draw [black!50] (0,-0.75) -- (0,1.5);
\draw [black!50] (-0.1/1.2,1.5) -- (0.1/1.2,1.5);
\draw [black!50] (0,0) -- (4.5,0);
\draw (2.25,1) node {$\widehat{f}$};
\draw [black!50] (1,-0.1/1.2) -- (1,0.1/1.2); \draw (0.75,-0.1/1.2) node[below] {$\sqrt{2}$};
\draw [black!50] (2,-0.1/1.2) -- (2,0.1/1.2); \draw (2,-0.1/1.2) node[below] {$\sqrt{4}$};
\draw [black!50] (3,-0.1/1.2) -- (3,0.1/1.2); \draw (3,-0.1/1.2) node[below] {$\sqrt{6}$};
\draw [black!50] (4,-0.1/1.2) -- (4,0.1/1.2); \draw (4,-0.1/1.2) node[below] {$\sqrt{8}$};
\draw (0,1.5) to[out=0,in=180] (1,0) to[out=0,in=180] (1.5,0.4)
to[out=0,in=180] (2,0) to[out=0,in=180] (2.5,0.2)
to[out=0,in=180] (3,0) to[out=0,in=180] (3.5,0.1)
to[out=0,in=180] (4,0) to[out=0,in=180] (4.5,0.05);
\end{tikzpicture}
\caption{This diagram, which is taken from \cite{C1}, shows
the roots of the magic function $f$ and its Fourier transform
$\widehat{f}$ in eight dimensions. It is not an accurate plot, since
these functions decrease very rapidly.}
\label{figure:diagram}
\end{figure}

Now the whole problem comes down to constructing magic functions with these
roots. That might not seem so difficult, but controlling the behavior of $f$
and $\widehat{f}$ simultaneously is a subtle problem. Of course we can obtain
any roots we'd like for $f$ or $\widehat{f}$ in isolation, but not
necessarily at the same time. This phenomenon is a form of uncertainty
principle \cites{BCK,GOS,CG}, much like the Heisenberg uncertainty principle.

Viazovska gave a remarkable construction of the eight-dimensional magic
function in terms of modular forms, which are a class of special functions
defined on the upper half-plane $\Hyp = \{ z \in \C : \Im(z) > 0\}$ and
satisfying certain transformation laws. The general theory of modular forms
can feel somewhat forbidding to beginners, but Poisson summation gives us a
simple way to get our hands on one example. The theta function $\theta \colon
\Hyp \to \C$ is defined by
\[
\theta(z) = \sum_{n \in \Z} e^{\pi i n^2 z} = 1 + 2 e^{\pi i z} + 2 e^{4 \pi i z} + 2 e^{9 \pi i z} + \cdots,
\]
which converges because $z \in \Hyp$ means $\Im(z)>0$ and thus $|e^{\pi i z}|<1$. This function
satisfies two key identities,
\begin{equation} \label{eq:thetatransformations}
\theta(z+2) = \theta(z) \qquad\text{and}\qquad \theta(-1/z) = (-i z)^{1/2} \theta(z).
\end{equation}
The first identity follows immediately from the defining series, while the
second is more subtle and will be proved below. In this equation, we have to choose
the branch for $(-i z)^{1/2}$ carefully. Throughout this paper, fractional powers such as
this one will be defined to be positive on the upper imaginary axis $(0,
\infty) i$ in $\Hyp$ and continuous on $\Hyp$.

To prove that $\theta(-1/z) = (-i z)^{1/2} \theta(z)$, we will use Poisson
summation for the one-dimensional lattice $\Z$ in $\R$. Consider the complex
Gaussian $f \colon \R \to \C$ defined by
\[
f(x) = e^{\pi i z x^2}
\]
with $z \in \Hyp$. When $z$ is purely imaginary, this function is an ordinary
Gaussian, and the other points in $\Hyp$ behave much the same. In particular,
one can check that
\begin{equation}
\label{eq:complexGaussian}
\widehat{f}(y) = (- i z)^{-1/2} e^{\pi i (-1/z) y^2},
\end{equation}
which is the complex generalization of the fact that the Fourier transform of
a wide Gaussian is a narrow Gaussian, and vice versa. Now Poisson summation
says that
\[
\sum_{x \in \Z} f(x) = \sum_{y \in \Z} \widehat{f}(y),
\]
because $\Z$ is self-dual. This equation amounts to
\[
\sum_{x \in \Z} e^{\pi i z x^2} = \sum_{y \in \Z} (- i z)^{-1/2} e^{\pi i (-1/z) y^2},
\]
and thus $\theta(-1/z) = (-i z)^{1/2} \theta(z)$.

The functions $z \mapsto z+2$ and $z \mapsto -1/z$ map $\Hyp$ to itself, and
they generate a group of linear fractional transformations of $\Hyp$ called
$\Gamma_\theta$, in honor of the function~$\theta$.  One can put a metric on
$\Hyp$ that turns it into the hyperbolic plane, at which point
$\Gamma_\theta$ becomes a discrete group of isometries of $\Hyp$, but we will
not need this interpretation. See Figure~\ref{fig:poles} for a picture of a
$\Gamma_\theta$-orbit in $\Hyp$.

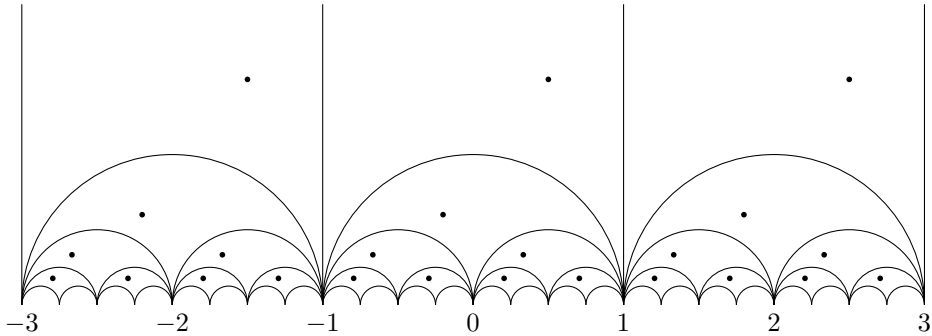
\begin{figure}
\centering
\begin{tikzpicture}
\draw (-6,0) node[below] {$-3$};
\draw (-4,0) node[below] {$-2$};
\draw (-2,0) node[below] {$-1$};
\draw (0,0) node[below] {$0$};
\draw (2,0) node[below] {$1$};
\draw (4,0) node[below] {$2$};
\draw (6,0) node[below] {$3$};
\foreach \i in {-4,-2,...,6}
{
\draw (\i,0) arc (0:180:1);
}
\foreach \i in {-2,2,6}
{
\draw (\i,0) arc (0:180:2);
}
\foreach \i in {-5,-4,...,6}
{
\draw (\i,0) arc (0:180:{1/2});
}
\foreach \i in {-11,-10,...,12}
{
\draw ({\i/2},0) arc (0:180:{1/4});
}
\draw (6,0)--(6,4);
\draw (2,0)--(2,4);
\draw (-2,0)--(-2,4);
\draw (-6,0)--(-6,4);
\fill (1,3) circle (0.0375);
\fill (-3,3) circle (0.0375);
\fill (5,3) circle (0.0375);
\fill (-0.4,1.2) circle (0.0375);
\fill (-4.4,1.2) circle (0.0375);
\fill (3.6,1.2) circle (0.0375);
\fill ({2/3},{2/3}) circle (0.0375);
\fill ({2/3-2},{2/3}) circle (0.0375);
\fill ({2/3-4},{2/3}) circle (0.0375);
\fill ({2/3-6},{2/3}) circle (0.0375);
\fill ({2/3+2},{2/3}) circle (0.0375);
\fill ({2/3+4},{2/3}) circle (0.0375);
\foreach \i in {-5,-4,...,6}
\fill ({\i-10/17},{6/17})  circle (0.0375);
\end{tikzpicture}
\caption{The regions shown here are ideal hyperbolic triangles (i.e., triangles
in the hyperbolic plane with vertices at infinity), and they are
fundamental domains for the action of $\Gamma_\theta$ on the upper half-plane.
In particular, each $\Gamma_\theta$-orbit intersects each triangle exactly once,
unless it intersects the boundary of the triangle. The dots show
a typical $\Gamma_\theta$-orbit.}
\label{fig:poles}
\end{figure}

Together with analyticity and some growth conditions, the identities
\eqref{eq:thetatransformations} say that $\theta$ is a \emph{modular form of
weight $1/2$} for the group $\Gamma_\theta$. Viazovska's solution of the
eight-dimensional sphere packing problem constructs the magic function using
$\theta$ and a number of other modular forms, in a way that looks rather
mysterious. What do modular forms have to do with radial
Schwartz functions?

Instead of examining the details of Viazovska's construction, let's think about a
bigger picture. We know the eight-dimensional magic function $f$ should
satisfy
\begin{align*}
f\big(\sqrt{2k}\big) &= 0 \qquad \text{for $k \ge 1$,}\\
f'\big(\sqrt{2k}\big) &= 0 \qquad \text{for $k \ge 2$,}\\
\widehat{f}\big(\sqrt{2k}\big) &=0 \qquad \text{for $k \ge 1$, and }\\
\widehat{f}\,'\big(\sqrt{2k}\big) & = 0 \qquad \text{for $k \ge 1$,}
\end{align*}
as in Figure~\ref{figure:diagram}. Viazovska conjectured that this data,
together with the nonzero value $f'\big(\sqrt{2}\big)$, would be enough to
determine $f$ uniquely. In fact, that turns out to be true:

\begin{theorem}[Cohn, Kumar, Miller, Radchenko, and Viazovska \cite{CKMRV2}]
\label{theorem:interpolatedouble} Let $(n,k_0)$ be $(8,1)$ or $(24,2)$. Then
every radial Schwartz function $f \colon \R^n \to \C$ is uniquely determined
by the values $f\big(\sqrt{2k}\big)$, $f'\big(\sqrt{2k}\big)$,
$\widehat{f}\big(\sqrt{2k}\big)$, and $\widehat{f}\,'\big(\sqrt{2k}\big)$ for
integers $k \ge k_0$.  Specifically, there exists an interpolation basis
$a_k, b_k, \widehat{a}_k, \widehat{b}_k$ of radial Schwartz functions on
$\R^n$ for $k \ge k_0$ such that for every $f$ and $x \in \R^n$,
\[
\begin{split}
f(x) &= \sum_{k=k_0}^\infty f\big(\sqrt{2k}\big)\,a_k(x)+\sum_{k=k_0}^\infty
f'\big(\sqrt{2k}\big)\,b_k(x)\\
&\quad\phantom{}+\sum_{k=k_0}^\infty \widehat{f}\big(\sqrt{2k}\big)\,\widehat{a}_k(x)
+\sum_{k=k_0}^\infty \widehat{f}\,'\big(\sqrt{2k}\big)\,\widehat{b}_k(x),
\end{split}	
\]
where these sums converge absolutely.
\end{theorem}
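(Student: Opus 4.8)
The plan is to prove Theorem~\ref{theorem:interpolatedouble} by \emph{constructing} the interpolation basis explicitly from modular forms, in the spirit of Viazovska's magic function and of Radchenko and Viazovska's one-dimensional interpolation theorem, and then harvesting from the construction both the absolute convergence of the series and the uniqueness statement. It is convenient to split off the Fourier action first: writing $g^{\pm} = \tfrac{1}{2}(g \pm \widehat g)$ decomposes a radial Schwartz function into a $+1$- and a $-1$-eigenfunction of the Fourier transform, so it suffices to produce, for each sign $\varepsilon$, two families of radial Schwartz functions that are $\varepsilon$-eigenfunctions of the Fourier transform and that reproduce, respectively, the value-sampling and the derivative-sampling data at the nodes $\sqrt{2j}$ for $j \ge k_0$; recombining the two signs yields $a_k$, $\widehat a_k$, $b_k$, $\widehat b_k$ with the required ``dual basis'' properties $a_k(\sqrt{2j}) = \delta_{jk}$, $a_k'(\sqrt{2j}) = 0$, $\widehat a_k(\sqrt{2j}) = \widehat a_k\,'(\sqrt{2j}) = 0$, and the analogous relations for $b_k$ (here $\widehat a_k$ and $\widehat b_k$ are literally the Fourier transforms of $a_k$ and $b_k$). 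Granting those properties and convergence, the formula and the uniqueness follow together: for any radial Schwartz $f$ the series $S(f)$ on the right-hand side converges to a radial Schwartz function, $f - S(f)$ has all of its sampled data equal to zero, and a separate uniqueness lemma---vanishing data forces the function to vanish---gives $f = S(f)$.

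For the construction, pass to the radial variable. On radial functions the Fourier transform is the Hankel transform of order $n/2 - 1$, and it intertwines with the Gaussian $z \mapsto e^{\pi i z r^2}$ exactly as in \eqref{eq:complexGaussian}, through the automorphy factor $(-iz)^{-n/2}$. Imitating the derivation of \eqref{eq:thetatransformations}, I would define each basis function as a contour integral $r \mapsto \int_{\mathcal C} \varphi(z)\, e^{\pi i z r^2}\, dz$, with $\mathcal C$ a path in $\Hyp$ joining cusps of $\Gamma_\theta$ and $\varphi$ a \emph{weakly holomorphic} modular form for the genus-zero group $\Gamma_\theta$ of weight $2 - n/2$---an integer here, namely $-2$ for $n = 8$ and $-10$ for $n = 24$, so one avoids the half-integral-weight subtleties of the one-dimensional case---transforming under $z \mapsto -1/z$ with the $\varepsilon$-twisted multiplier forced by $(-iz)^{-n/2}$, holomorphic on $\Hyp$, with prescribed principal parts at the cusps; a companion family of quasimodular forms (carrying an extra factor of $z$) handles the derivative-sampling basis, since differentiating in $r$ brings down such a factor. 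The weight-$(2-n/2)$ law together with \eqref{eq:complexGaussian} and a deformation of $\mathcal C$ makes the resulting function an $\varepsilon$-eigenfunction; its Schwartz decay comes from the exponential decay of $\varphi$ toward $i\infty$; and the sampled values ($\delta_{jk}$ or $0$, values or derivatives as appropriate) fall out of evaluating the integral as a period, or a finite sum of residues, that isolates a single Fourier coefficient of $\varphi$. Producing a $\varphi$ with the required principal parts is a finite-dimensional linear-algebra problem over the ring of modular forms for $\Gamma_\theta$, and one checks that it is solvable in these two dimensions; the only obstructions come from the holomorphic modular forms of the Eichler-complementary weight $n/2$---weight $4$ for $n = 8$ and weight $12$ for $n = 24$, the weights of the theta series of $E_8$ and of the Leech lattice---and it is the extra cusp form present in weight $12$ that forces the index to start at $k_0 = 2$, rather than $1$, when $n = 24$.

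Absolute convergence is then a comparison of growth rates: uniform estimates on the modular forms near the cusps yield $|a_k(x)|, |b_k(x)|, |\widehat a_k(x)|, |\widehat b_k(x)| \le P(k)$ for a fixed polynomial $P$, locally uniformly in $x$, whereas $f$ being Schwartz forces $|f(\sqrt{2k})|, |f'(\sqrt{2k})|, |\widehat f(\sqrt{2k})|, |\widehat f\,'(\sqrt{2k})|$ to be $O_N(k^{-N})$ for every $N$; so all four series converge absolutely and locally uniformly, and---differentiating term by term and tracking the transform---their sum $S(f)$ is again a radial Schwartz function, which in particular legitimizes interchanging the sampling operations with the sums in the argument above.

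The remaining ingredient, and the technical heart of the matter, is the uniqueness lemma: a radial Schwartz function $F$ on $\R^n$ with $F(\sqrt{2k}) = F'(\sqrt{2k}) = \widehat F(\sqrt{2k}) = \widehat F\,'(\sqrt{2k}) = 0$ for all $k \ge k_0$ must vanish identically. The plan here is dual to the construction: attach to $F$ the generating series $\tau \mapsto \sum_{k \ge k_0} F(\sqrt{2k})\, e^{\pi i k \tau}$ and its three companions, together with the Gauss--Hankel transform $h_F(\tau) = \int_0^\infty F(r)\, e^{\pi i \tau r^2}\, r^{n-1}\, dr$, which satisfies $h_{\widehat F}(\tau) = (-i\tau)^{-n/2}\, h_F(-1/\tau)$---the weight-$n/2$ counterpart of \eqref{eq:complexGaussian}. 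An Eichler-integral computation should show that these generating data patch together into a weakly holomorphic vector-valued modular form of weight $n/2$ for $\Gamma_\theta$ whose principal parts at the cusps are controlled exactly by $F$'s sampled values; when those values vanish, the resulting form lies in a space that a valence count shows is $\{0\}$, so $h_F \equiv 0$ and hence $F \equiv 0$ upon inverting the transform. I expect this to be the main obstacle---setting up the modular transformation law (the period relations) for the generating functions, controlling their behavior at the cusps finely enough to land in a zero-dimensional space, and, the genuinely delicate point where $k_0$ is pinned down, handling the exceptional small index: the sign change that $F$ is permitted at $\sqrt{2}$ when $n = 8$, and the ``missing'' $k = 1$ row imposed by the weight-$12$ cusp form when $n = 24$.
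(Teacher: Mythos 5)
The paper does not actually prove Theorem~\ref{theorem:interpolatedouble}; it states the result, cites \cite{CKMRV2}, and then explicitly declares that it will instead examine the ``technically simpler variant'' Theorem~\ref{theorem:interpolateR1} of Radchenko and Viazovska, which is what the remainder of the exposition sketches. So there is no in-paper proof of this statement to compare against, and your proposal must be judged on its own terms, informed by the machinery the paper does set up for the first-order case.

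Your overall architecture is faithful to that machinery and to the genuine proof in \cite{CKMRV2}: split into $\pm1$ Fourier eigenspaces, build basis functions as contour integrals $\int \varphi(z)\,e^{\pi i z r^2}\,dz$ of weakly holomorphic modular forms (and quasimodular forms for the derivative part), read off the sampled values as Fourier coefficients, establish polynomial growth in $k$ to get absolute convergence, and prove a separate vanishing lemma via generating functions. The weight $2 - n/2$ is computed correctly, and observing that it is an integer here is a real simplification over the $n=1$ case. Two points, however, need correction or substantial strengthening before this could count as a proof blueprint. First, the modular group is misidentified: with nodes $\sqrt{2k}$, the Gaussian $e^{\pi i \tau r^2}$ evaluated at $r = \sqrt{2k}$ gives $e^{2\pi i k\tau}$, so the generating function has period $1$, not $2$, and the group is generated by $\tau \mapsto \tau+1$ and $\tau\mapsto -1/\tau$, i.e., $\mathrm{SL}_2(\Z)$ (or a congruence subgroup suited to the eigenfunction decomposition), not $\Gamma_\theta$. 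This is not cosmetic: the cusp structure, the valence count, and the space of obstructing holomorphic forms all change with the group. Second, the uniqueness step---which you correctly identify as the heart of the matter---is essentially a placeholder. Analogously to the paper's Lemma~\ref{lemma:analyticcont}, one must formulate and verify an inhomogeneous functional equation for the generating functions under $\tau\mapsto -1/\tau$, handle the double zeros at the nodes (which is a second-order analytic-continuation problem, not a simple pole), account for the fact that $f(0)$ and $\widehat f(0)$ are \emph{not} among the samples, and incorporate the Poisson-summation identities over $E_8$ and $\Lambda_{24}$ (and their scaling derivatives), which pin down the exact dimension count and the value of $k_0$. Your heuristic that the weight-$12$ cusp form forces $k_0=2$ in dimension $24$ is pointing at the right phenomenon, but the claim that ``a valence count shows the space is $\{0\}$'' is doing far more work than it can bear as stated.
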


In particular, up to scaling the magic function is the interpolation basis
function $b_{k_0}$ in this theorem. One does not need this interpolation
theorem to solve the sphere packing problem, but it is needed for analyzing
ground states of more general particle systems in $\R^8$ and $\R^{24}$ (see
\cite{CKMRV2}), and it provides a broader context for the magic functions.

Theorem~\ref{theorem:interpolatedouble} is similar in spirit to other
interpolation theorems in mathematics. The simplest and most famous of these
theorems is Lagrange interpolation, which says that a polynomial in one
variable of degree less than $n$ can be reconstructed from its values at any
$n$ distinct points. If the interpolation points are $x_1$, \dots, $x_n$, then we
can write down an interpolation basis $p_1$, \dots, $p_n$ as
\begin{equation} \label{eq:lagrangeproduct}
p_k(x) = \prod_{\substack{j=1\\j \ne k}}^n \frac{x-x_j}{x_k-x_j},
\end{equation}
so that every polynomial $f$ of degree less than $n$ is given by
\[
f(x) = \sum_{j=1}^n f(x_j) p_j(x).
\]
Lagrange interpolation can be generalized to Hermite interpolation, which
takes into account derivatives along similar lines to
Theorem~\ref{theorem:interpolatedouble}: a polynomial $f$ can be
reconstructed from the values $f^{(j)}(x_k)$ with $0 \le j < d_k$ and $1 \le
k \le m$ if its degree is less than $\sum_{k=1}^m d_k$.

\addtolength{\textheight}{1.25\baselineskip} 

One important relative of Lagrange interpolation is Shannon sampling, which
in the case of Schwartz functions $f \colon \R \to \C$ says that if
$\widehat{f}$ vanishes outside the interval $[-r/2,r/2]$ for some $r$, then
$f$ is determined by its values on $r^{-1} \Z$ via
\[
f(x) = \sum_{n \in \Z} f(n/r) \frac{\sin \pi (rx-n)}{\pi (rx-n)}.
\]
This theorem plays a crucial role in information theory, since it says that a
band\-limited signal (i.e., one with a limited range of frequencies) is
determined by periodic samples. It's worth noting that the product formula
\begin{equation} \label{eq:sineproduct}
\frac{\sin \pi x}{\pi x} = \prod_{j=1}^\infty \left( 1-\frac{x}{j^2}\right)
\end{equation}
is analogous to the products \eqref{eq:lagrangeproduct} in the Lagrange
interpolation basis. Much is known about Shannon sampling and its variations;
see, for example, \cite{Hig} and the references cited therein.

Both Lagrange interpolation and Shannon sampling rely on a notion of size. We
measure the size of a polynomial by its degree, and the size of a
band-limited function by its bandwidth, the smallest $r$ such that
$\supp(\widehat{f}) \subseteq [-r/2,r/2]$. Then the larger a function is, the
more interpolation points are required to reconstruct it, with ``more''
referring to density in the band-limited case. Here the intuition is that size
controls how many roots a function can have.\footnote{Furthermore, size is
related to growth at infinity. For degrees of polynomials this is clear,
while a band-limited function of bandwidth $r$ can be analytically continued
to the entire complex plane and satisfies $|f(z)| = O(e^{\pi r |z|})$. In
other words, it is an entire function of exponential type~$\pi r$.}

Puzzlingly, Theorem~\ref{theorem:interpolatedouble} shows no sign of a
similar notion of size. It is reminiscent of Shannon sampling, in that it
takes into account both $f$ and $\widehat{f}$, but it treats them
symmetrically. In particular, there is little hope of a product formula along
the lines of \eqref{eq:lagrangeproduct} or \eqref{eq:sineproduct}, because
specifying the roots of $f$ will not yield the roots of $\widehat{f}$. There
seems to be a fundamental difference between these interpolation formulas,
and neither Lagrange interpolation nor Shannon sampling offers a clue
as to how to prove Theorem~\ref{theorem:interpolatedouble}.

\section{First-order Fourier interpolation}

How does one prove an interpolation theorem like
Theorem~\ref{theorem:interpolatedouble}? We'll examine a technically simpler
variant due to Radchenko and Viazovska, which is important in its own right
and a beautiful illustration of Fourier interpolation. It deals with
functions of one variable (so ``radial'' becomes ``even''), and it studies
interpolation to first order, without derivatives. This first-order
interpolation theorem does not seem to have any applications to sphere
packing, but it's a fundamental fact about Fourier analysis, and it is
remarkable that it was not known until well into the 21st century.

\begin{theorem}[Radchenko and Viazovska \cite{RV}]
\label{theorem:interpolateR1} There exist even Schwartz functions $a_n \colon
\R \to \R$ for $n = 0$, $1$, $2$, \dots\  such that every even Schwartz function $f
\colon \R \to \R$ satisfies
\[
f(x) = \sum_{n=0}^\infty f\big(\sqrt{n}\big) \, a_n(x) + \sum_{n=0}^\infty \widehat{f}\big(\sqrt{n}\big) \, \widehat{a}_n(x)
\]
for all $x \in \R$, and these sums converge absolutely.
\end{theorem}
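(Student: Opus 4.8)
The plan is to construct the interpolation basis by a generating-function method rooted in the theory of modular forms for $\Gamma_\theta$. The key idea is to look for functions $a_n$ whose generating function $\sum_n a_n(x) e^{\pi i n \tau}$ (with $\tau \in \Hyp$) is built from a weakly holomorphic modular form, so that the required interpolation properties $a_n(\sqrt{m}) = \delta_{nm}$, $\widehat{a}_n(\sqrt{m}) = 0$, and analogous conditions for the $\widehat{a}_n$, become transparent transformation laws under $z \mapsto z+2$ and $z \mapsto -1/z$. Concretely, I would seek a kernel $K(\tau, x)$, weight $1/2$ in $\tau$ for $\Gamma_\theta$ with a prescribed multiplier, such that $K(\tau,x)$ has a $q$-expansion $\sum_n a_n(x) e^{\pi i n \tau}$ near the cusp $\tau = i\infty$ and a complementary expansion $\sum_n \widehat{a}_n(x) e^{\pi i n \tau}$ near the cusp $\tau = 0$ (reached via $\tau \mapsto -1/\tau$, which is why the Fourier transform appears). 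The free modular forms on $\Gamma_\theta$ are generated by $\theta$ and the Hauptmodul $\lambda$ for $\Gamma(2)$; I would write $K$ explicitly as a ratio of such forms times the Gaussian $e^{\pi i \tau x^2}$, whose Fourier transform in $x$ is $(-i\tau)^{-1/2} e^{\pi i(-1/\tau)x^2}$ by \eqref{eq:complexGaussian}, so that applying the Fourier transform in $x$ interacts cleanly with $\tau \mapsto -1/\tau$.

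The steps, in order, would be: (i) identify the correct space of modular forms and write down the generating kernel $K(\tau,x)$ explicitly, checking its weight-$1/2$ transformation law under the two generators of $\Gamma_\theta$; (ii) extract $a_n(x)$ and $\widehat{a}_n(x)$ as the coefficients of the two cuspidal expansions of $K$ and verify that they are even Schwartz functions — decay in $x$ comes from the exponential damping of the Gaussian factor once $\Im \tau > 0$ is fixed, and smoothness from differentiating under a contour integral $a_n(x) = \int K(\tau,x) e^{-\pi i n \tau}\, d\tau$ along a horizontal segment in $\Hyp$; (iii) from the transformation laws read off the interpolation identities, namely $a_n(\sqrt m) = \delta_{nm}$ and $\widehat{a}_n(\sqrt m) = 0$, together with the Fourier-transform counterparts, by evaluating $K$ and $\widehat{K}$ at the special points; (iv) prove the series $\sum_n f(\sqrt n) a_n(x) + \sum_n \widehat{f}(\sqrt n)\,\widehat{a}_n(x)$ converges absolutely and equals $f(x)$. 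For step (iv) the natural route is to show the difference $g = f - (\text{the series})$ is an even Schwartz function all of whose sampled values $g(\sqrt n)$ and $\widehat g(\sqrt n)$ vanish, and then invoke a \emph{uniqueness} statement: the only such $g$ is $0$. That uniqueness is equivalent to showing that the functionals $f \mapsto f(\sqrt n)$, $f \mapsto \widehat f(\sqrt n)$ span a weak-$*$-dense subspace of the dual, which by a Poisson-summation/pairing argument reduces to the statement that every weight-$1/2$ weakly holomorphic modular form for $\Gamma_\theta$ with appropriate growth that is ``orthogonal'' to all the $a_n, \widehat a_n$ must vanish — a finite-dimensionality fact about modular forms.

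The main obstacle I expect is step (iv), the completeness and convergence half: constructing candidate basis functions with the right interpolation values is essentially bookkeeping with modular transformations, but proving that they actually reproduce \emph{every} even Schwartz function — equivalently, that there is no nonzero even Schwartz function vanishing at all $\sqrt n$ together with its Fourier transform — requires genuinely controlling a space of modular objects and ruling out ``extra'' solutions. A secondary technical difficulty is justifying absolute convergence of the interpolation series uniformly enough to interchange summation with the Fourier transform and with the contour integral defining the $a_n$; this needs quantitative bounds $|a_n(x)| = O(n^{c})$ uniformly on compact $x$-sets, obtained by shifting the contour in the integral representation and estimating the modular kernel near the cusps. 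Finally, one must handle the mild arithmetic subtlety that the sampling points $\sqrt n$ are not all distinct lengths in a lattice — this is why the modular group is $\Gamma_\theta$ rather than $\mathrm{SL}_2(\Z)$, and why the multiplier system and the two inequivalent cusps $0$ and $i\infty$ play the roles they do.
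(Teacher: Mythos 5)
Your proposal captures the right overall flavor---generating functions $\sum_n a_n(x)\,e^{\pi i n \tau}$, the group $\Gamma_\theta$ with its two cusps, and $\theta$ and $\lambda$ as raw materials---but it is missing the actual engine of the argument. The paper's proof does not proceed by constructing a homogeneous weight-$1/2$ object; instead it identifies an \emph{inhomogeneous} functional equation for the pair of generating functions $F(\tau,x)=\sum_n a_n(x)e^{n\pi i\tau}$ and $\widehat F(\tau,x)=\sum_n\widehat a_n(x)e^{n\pi i\tau}$: besides periodicity $F(\tau+2,x)=F(\tau,x)$, $\widehat F(\tau+2,x)=\widehat F(\tau,x)$, one needs
\[
F(\tau,x) + (-i\tau)^{-1/2}\,\widehat F(-1/\tau,x) \;=\; e^{\pi i \tau x^2}.
\]
This equation \emph{is} the interpolation formula for the Gaussian $f(x)=e^{\pi i \tau x^2}$, since $f(\sqrt n)=e^{n\pi i\tau}$ and, by the complex-Gaussian Fourier transform, $\widehat f(\sqrt n)=(-i\tau)^{-1/2}e^{n\pi i(-1/\tau)}$. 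A modular kernel that transforms \emph{homogeneously} under $\tau\mapsto-1/\tau$, as you propose, cannot produce the right-hand side $e^{\pi i\tau x^2}$. The missing idea is that $F$ and $\widehat F$ are obtained as contour integrals $\int_{-1}^1 K(\tau,z)\,e^{\pi i z x^2}\,dz$ and $\int_{-1}^1 \widehat K(\tau,z)\,e^{\pi i z x^2}\,dz$ of \emph{two-variable meromorphic} kernels $K(\tau,z)$, $\widehat K(\tau,z)$ built from $\theta$, $\lambda$, and the Hauptmodul $J$. The defining feature---absent from your sketch---is that $z\mapsto K(\tau,z)$ has simple poles on the $\Gamma_\theta$-orbit of $\tau$ with prescribed residue at $z=\tau$; when one analytically continues $F(\tau,x)$ across the contour and uses $K(-1/\tau,z)=-(-i\tau)^{1/2}\widehat K(\tau,z)$, the residue theorem produces exactly the term $e^{\pi i\tau x^2}$. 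Your kernel $K(\tau,x)$, holomorphic in a single modular variable, has no pole to deform past and hence no source for the inhomogeneity.

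Your step (iv) is also circular as stated: showing that $g=f-(\text{series})$ is zero because $g$ and $\widehat g$ vanish at all $\sqrt n$ is precisely the Fourier uniqueness statement, which the paper notes is a corollary of (and apparently no easier than) the interpolation theorem, and ``a finite-dimensionality fact about modular forms'' is not a substitute for a proof. The paper's route is different and avoids this: the displayed functional equation already proves the interpolation formula for every complex Gaussian $e^{\pi i\tau x^2}$; these span a dense subspace of the even Schwartz functions; and one then shows the error functional $\Lambda_x(f)=f(x)-\sum_n f(\sqrt n)a_n(x)-\sum_n\widehat f(\sqrt n)\widehat a_n(x)$ is continuous in the Schwartz topology by proving that the seminorms of $a_n$ and $\widehat a_n$ grow at most polynomially in $n$ (via growth bounds on $K$, $\widehat K$). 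Density plus continuity then force $\Lambda_x\equiv 0$, with no appeal to a separate uniqueness theorem.
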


\addtolength{\textheight}{-1.25\baselineskip} 

There is also a corresponding theorem about odd functions 
\cite[Theorem~7]{RV}, which can be proved in almost the same way. We'll focus on even
functions here for simplicity. Note also that the root spacing has changed
from $\sqrt{2n}$ to $\sqrt{n}$ in comparison with
Theorem~\ref{theorem:interpolatedouble}, which reflects the change in the
order of interpolation.

As a consequence of this interpolation theorem, if an even Schwartz function
$f \colon \R \to \R$ satisfies $f\big(\sqrt{n}\big) =
\widehat{f}\big(\sqrt{n}\big) = 0$ for $n=0$, $1$, $2$, \dots, then $f$ vanishes
identically. It's not so surprising that constructing an explicit
interpolation basis\break 
$a_0$, $a_1$, \dots\ would require special functions, such as
modular forms, but it's note\-worthy that even this corollary about vanishing
does not seem easy to prove directly.

In the remainder of this section, we'll sketch a proof of
Theorem~\ref{theorem:interpolateR1}. The sketch will omit a number of
analytic details, but it will outline the techniques and explain where
additional work is required.

The central question is where the interpolation basis $a_0$, $a_1$, \dots\  comes
from. We need to characterize these functions and prove that they have the
desired properties. A first observation is that the interpolation basis is
not quite unique, because Poisson summation over $\Z$ implies that every even
Schwartz function $f$ satisfies
\[
f(0) + 2f(1) + 2f(2) + \cdots = \widehat{f}(0) + 2\widehat{f}(1) + 2\widehat{f}(2) + \cdots.
\]
In particular, $\widehat{f}(0)$ is determined by the values
$f(0)$, $f(1)$, $f(2)$, \dots\  and $\widehat{f}(1)$, $\widehat{f}(2)$, \dots. To
account for this redundancy, we will impose the constraint $\widehat{a}_0 =
a_0$, so that the interpolation formula becomes
\[
f(x) = (f(0) + \widehat{f}(0))a_0(x) +  \sum_{n=1}^\infty f\big(\sqrt{n}\big) \, a_n(x) + \sum_{n=1}^\infty \widehat{f}\big(\sqrt{n}\big) \, \widehat{a}_n(x).
\]
It turns out that this formula is now irredundant, with no additional linear
relations between the values $f\big(\sqrt{n}\big)$ and
$\widehat{f}\big(\sqrt{n}\big)$, and the interpolation basis is uniquely
determined. Substituting $f = a_n$ shows that we can characterize $a_n$ by
its values at the points $\sqrt{m}$ with $m=0$, $1$, $2$, \dots. Specifically, for
$n,m \ge 1$, we must have
\[
a_n\big(\sqrt{m}\big) = \begin{cases}
1 & \text{if $m=n$, and}\\
0 & \text{otherwise,}
\end{cases}
\]
$\widehat{a}_n\big(\sqrt{m}\big) = 0$, and $a_n(0) + \widehat{a}_n(0)=0$,
while $a_0$ must satisfy $\widehat{a}_0 = a_0$, $a_0(0) = 1/2$, and
$a_0\big(\sqrt{m}\big)=0$ for all $m \ge 1$.

These constraints let us get a handle on $a_n$, and we can use them to
compute numerical approximations to $a_n$. More dramatically, they allow us
to use Viazovska's modular form techniques from \cite{V} to construct $a_n$
explicitly. For example, we can write down $a_0$ as follows:

\begin{lemma} \label{lemma:a0}
Let $a_0 \colon \R \to \C$ be defined by
\[
a_0(x) = \frac{1}{4} \int_{-1}^1 \theta(z)^3 e^{\pi i z x^2} \, dz,
\]
where we integrate over a semicircle in the upper half-plane $\Hyp$. Then
$a_0$ is an even Schwartz function with Fourier transform $\widehat{a}_0 =
a_0$, and it satisfies $a_0(0)=1/2$ and $a_0\big(\sqrt{m}\big) = 0$ for all
positive integers $m$.
\end{lemma}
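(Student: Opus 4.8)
The plan is to verify the four assertions in turn, exploiting throughout that $\theta$ is holomorphic on $\Hyp$ and $z \mapsto e^{\pi i z x^2}$ is entire, so that $\theta(z)^3 e^{\pi i z x^2}$ is holomorphic on $\Hyp$ and Cauchy's theorem lets one move the semicircular contour $\gamma$ (from $-1$ to $1$) at will as long as its endpoints stay pinned at $\pm 1$. The first point is to pin down the size of $\theta$ at the two cusps of $\Gamma_\theta$: from the defining series $\theta(z) \to 1$ as $\Im z \to \infty$, whereas as $z \to \pm 1$ within $\Hyp$ the function $\theta$ decays rapidly. The latter estimate does not follow directly from \eqref{eq:thetatransformations}, since nothing in $\Gamma_\theta$ carries $\pm 1$ to $\infty$, but it comes from the same tools: using $\theta(z+1) = \sum_{n} (-1)^n e^{\pi i n^2 z}$ and applying Poisson summation to the complex Gaussian exactly as in the proof of $\theta(-1/z) = (-iz)^{1/2}\theta(z)$, one rewrites $\theta$ near $\pm 1$ in terms of $\sum_n e^{\pi i (-1/z)(n + \tfrac12)^2}$, whose smallest exponent forces decay of $\theta(z)$ that is exponential in $1/|z \mp 1|$ as $z \to \pm 1$ along $\gamma$. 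Consequently $\theta(z)^3 e^{\pi i z x^2}$ extends continuously by $0$ to the endpoints of $\gamma$, so $a_0$ is well defined, and it is even because the integrand depends on $x$ only through $x^2$. For the Schwartz property one estimates $|a_0(x)| \le \tfrac14 \int_\gamma |\theta(z)|^3 e^{-\pi x^2 \Im z}\,|dz|$: on the part of $\gamma$ with $\Im z \ge \delta$ this yields Gaussian decay in $x$, while near the endpoints the rapid decay of $\theta^3$ overwhelms the loss from $\Im z \to 0$, and a Laplace-type estimate gives exponential decay of $a_0$ and, after differentiating under the integral sign and absorbing the resulting polynomial factors in $z$, of all its derivatives.

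Next I would show $\widehat{a_0} = a_0$ by taking the Fourier transform inside the contour integral (legitimate by the decay just obtained) and using \eqref{eq:complexGaussian} to get
\[
\widehat{a_0}(y) = \frac14 \int_\gamma \theta(z)^3\,(-iz)^{-1/2}\,e^{\pi i (-1/z) y^2}\, dz .
\]
Substituting $w = -1/z$, which carries the unit semicircle to itself with reversed orientation and has $dz = z^2\, dw$, and using $\theta(z)^3 = (-iz)^{-3/2}\theta(w)^3$ from \eqref{eq:thetatransformations}, the factors recombine as $(-iz)^{-3/2}\cdot(-iz)^{-1/2}\cdot z^2 = -1$; this sign together with the orientation reversal leaves exactly $\widehat{a_0}(y) = \tfrac14 \int_\gamma \theta(w)^3 e^{\pi i w y^2}\, dw = a_0(y)$. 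This is the step where Viazovska's mechanism operates: the automorphy factor of $\theta$ is precisely what is needed to cancel the Jacobian and the prefactor coming from the Fourier transform of the Gaussian.

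For the values, expand $\theta(z)^3 = \sum_{m \ge 0} r_3(m)\, e^{\pi i m z}$, where $r_3(m)$ is the number of representations of $m$ as an ordered sum of three squares. Choosing $x = \sqrt{m_0}$ with $m_0$ a nonnegative integer makes the integrand $\sum_{m \ge 0} r_3(m)\, e^{\pi i (m + m_0) z}$, and term-by-term integration along $\gamma$ uses only that $e^{\pi i k z}$ is entire, so that
\[
\int_\gamma e^{\pi i k z}\, dz = \frac{e^{\pi i k} - e^{-\pi i k}}{\pi i k} = \frac{2 \sin \pi k}{\pi k},
\]
which is $0$ for every nonzero integer $k$ and is $2$ for $k = 0$. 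Hence for $m_0 \ge 1$ every term vanishes and $a_0(\sqrt{m_0}) = 0$, while for $m_0 = 0$ only the $m = 0$ term survives, contributing $\tfrac14\, r_3(0)\cdot 2 = \tfrac12$, so $a_0(0) = \tfrac12$. (The same computation gives the closed form $a_0(x) = \tfrac{\sin \pi x^2}{2\pi}\sum_{m \ge 0}\tfrac{(-1)^m r_3(m)}{m + x^2}$ for general $x$, but only integer values of $x^2$ are needed here.)

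The step I expect to be the main obstacle is analytic rather than algebraic: justifying the two interchanges of a limiting operation with the contour integral. The $q$-series $\sum_m r_3(m) e^{\pi i m z}$ does not converge absolutely near $z = \pm 1$, and $r_3(m)$ is of size $\sqrt m$ on average, so the term-by-term integration above has to be defended by isolating short arcs around the endpoints, where the rapid decay of $\theta^3$ provides bounds uniform in the truncation, and invoking uniform convergence on the complementary arc; the exchange of Fourier transform and integral in the previous step requires a comparable dominated-convergence argument. Making the cusp estimate for $\theta$ at $\pm 1$ quantitative enough to carry all of this is, to my mind, the real heart of the lemma; everything else is the clean interaction between the modular transformation law and the Fourier transform of the Gaussian.
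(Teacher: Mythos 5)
Your proof is correct in outline and for two of the three nontrivial steps parallels the paper's closely: the Schwartz property is handled the same way (via the rapid decay of $\theta$ at the cusps $\pm 1$, for which you supply more detail than the paper does), and the self-duality $\widehat{a}_0 = a_0$ uses the identical substitution $w = -1/z$ with the identical bookkeeping of the automorphy factor $(-iz)^{3/2}$ against the Gaussian prefactor $(-iz)^{-1/2}$ and the Jacobian $z^2$. Where you diverge is in computing $a_0(\sqrt{m})$, and there the paper's route is worth internalizing because it dissolves precisely the obstacle you flag at the end. The paper \emph{first} deforms the semicircle into the three-sided path through $-1+i$ and $1+i$ (Figure~\ref{fig:defcircline}), observing that the two vertical sides cancel because $\theta(z+2) = \theta(z)$ maps one onto the other with opposite orientation; \emph{only then} does it expand $\theta(z)^3$ as a $q$-series, on the horizontal segment $\Im z = 1$, where $|e^{\pi i m z}| = e^{-\pi m}$ makes the series converge absolutely and uniformly, so term-by-term integration is immediate and $a_0(\sqrt{m})$ is read off as a Fourier coefficient. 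You instead expand first on $\gamma$ itself, where the series does not converge uniformly near $\pm 1$, and then integrate each term by Cauchy; this is why you are forced into the truncation-and-endpoint-arc argument you rightly identify as the hard point. You are already using contour deformation (to evaluate $\int_\gamma e^{\pi i k z}\,dz$), so the fix is simply to deform the whole contour via the periodicity of $\theta$ \emph{before} expanding, after which the analytic interchange is trivial. The closed form you record for $a_0(x)$ is indeed the Radchenko--Viazovska formula, but with your ordering of operations its convergence likewise needs the same deformation to be justified cleanly.
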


We'll use the same semicircular contour of integration in all integrals from
$-1$ to~$1$ below. Recall that the theta function in this integral is defined
for $z \in \Hyp$ by
\[
\theta(z) = \sum_{n \in \Z} e^{\pi i n^2 z}
\]
and satisfies the functional equations $\theta(z+2) = \theta(z)$ and
$\theta(-1/z) = (-iz)^{1/2} \theta(z)$.

\begin{proof}[Sketch of proof]
The function $a_0$ is manifestly even, and we can prove that it is a Schwartz
function by analyzing the behavior of $\theta(z)$ as $z$ tends to $\pm 1$.
Specifically, if we remove small neighborhoods of $\pm 1$ from the contour,
then we obtain a smooth function of $x$. One can show that this function and its
derivatives are rapidly decreasing as $x \to \infty$, essentially because the complex
phases interfere destructively. To show that $a_0$ itself is a Schwartz
function, we just have to check that the behavior as $z \to \pm 1$ is not bad
enough to ruin this analysis. We will omit the details here.

To show that $\widehat{a}_0 = a_0$, we can take the Fourier transform of the
complex Gaussian under the integral sign using \eqref{eq:complexGaussian} and
change variables to $u = -1/z$, to obtain
\begin{align*}
\widehat{a}_0(x) &=  \frac{1}{4} \int_{-1}^1 \theta(z)^3 (-iz)^{-1/2} e^{\pi i (-1/z) x^2} \, dz\\
&= \frac{1}{4} \int_{1}^{-1} \theta(-1/u)^3 (i/u)^{-1/2} e^{\pi i u x^2} u^{-2} \, du\\
&= \frac{1}{4} \int_{-1}^1 \theta(u)^3 e^{\pi i u x^2} \, du\\
&= a_0(x),
\end{align*}
where the third line follows from $\theta(-1/u)^3 = (-iu)^{3/2} \theta(u)^3$ and
\[
-(-i u)^{3/2} (i/u)^{-1/2} u^{-2} = 1
\]
for $u \in \Hyp$. (To check this last identity, note that the left side is
always $\pm 1$, it is continuous for $u \in \Hyp$, and it equals $1$ when
$u=i$.)

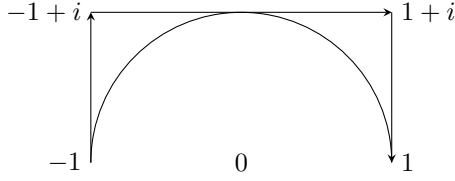
\begin{figure}
\centering
\begin{tikzpicture}[scale=2]
\draw[-stealth] (-1,0)--(-1,1);
\draw[-stealth] (-1,1)--(1,1);
\draw[-stealth] (1,1)--(1,0);
\draw (1,0) arc (0:180:1);
\draw (-1,0) node[left] {$-1$};
\draw (-1,1) node[left] {$-1+i$};
\draw (1,0) node[right] {$1$};
\draw (1,1) node[right] {$1+i$};
\draw (0,0) node {$0$};
\end{tikzpicture}
\caption{When we deform the semicircle into this polygonal path, the vertical
sides cancel because $\theta(z+2)=\theta(z)$.}
\label{fig:defcircline}
\end{figure}

Finally, we can compute $a_0\big(\sqrt{m}\big)$ for nonnegative integers $m$
using the identity
\begin{align*}
a_0\big(\sqrt{m}\big) &= \frac{1}{4} \int_{-1}^1 \theta(z)^3 e^{m \pi i z} \, dz\\
&= \frac{1}{4} \int_{-1+i}^{1+i}  \theta(z)^3 e^{m \pi i z} \, dz,
\end{align*}
where we have deformed the contour to a straight line from $-1+i$ to $1+i$ as in Figure~\ref{fig:defcircline},
which is possible because the integrals between $0$ and $-1+i$ and between
$1+i$ and $1$ cancel due to $\theta(z+2)=\theta(z)$. Now we write
\[
\theta(z) = 1 + 2 e^{\pi i z} + 2 e^{4 \pi i z} + 2 e^{9 \pi i z} + \cdots
\]
and expand $\theta(z)^3$ as a series in powers of $e^{\pi i z}$. By Fourier
orthogonality, the value
\[
a_0\big(\sqrt{m}\big) = \frac{1}{4} \int_{-1+i}^{1+i}  \theta(z)^3 e^{m \pi i z} \, dz
\]
is $1/2$ times the coefficient of $e^{-m \pi i z}$ in this expansion of
$\theta(z)^3$. In particular, $a_0(0)=1/2$ and $a_0\big(\sqrt{m}\big)=0$ for
positive integers $m$, as desired, since there are no negative powers of
$e^{\pi i z}$ in this series.
\end{proof}

What made this proof work is that the identity $\theta(-1/z)^3 = (-iz)^{3/2}
\theta(z)^3$ gave us $\widehat{a}_0=a_0$, while the identity $\theta(z+2)^3 =
\theta(z)^3$ let us compute the values $a_0\big(\sqrt{m}\big)$ as Fourier
series coefficients. One can obtain each basis function $a_n$ using similar
constructions, which require increasingly elaborate replacements for
$\theta(z)^3$ as $n$ grows, and it is not immediately clear how to describe
or analyze them systematically. Furthermore, obtaining the basis functions
individually does not explain why the interpolation formula actually holds:
these functions could in principle exist yet not suffice to reconstruct an
arbitrary even Schwartz function in Theorem~\ref{theorem:interpolateR1}.

To give a uniform account of these functions, we will construct generating
functions for the interpolation basis. For $\tau \in \Hyp$, let
\[
F(\tau, x) = \sum_{n=0}^\infty  a_n(x) e^{n \pi i \tau},
\]
and denote its Fourier transform in $x$ by
\[
\widehat{F}(\tau,x) = \sum_{n=0}^\infty \widehat{a}_n(x) e^{n \pi i  \tau}.
\]
Being Fourier series, these functions satisfy the functional equations
\begin{equation} \label{eq:add2}
F(\tau+2,x) = F(\tau,x) \qquad\text{and}\qquad \widehat{F}(\tau+2,x) = \widehat{F}(\tau,x).
\end{equation}
Furthermore, formula \eqref{eq:complexGaussian} for the Fourier transform
of a complex Gaussian implies that the interpolation formula from
Theorem~\ref{theorem:interpolateR1} for the function $f(x) =  e^{\pi i \tau
x^2}$ is equivalent to
\[
F(\tau,x) + (-i \tau)^{-1/2} \widehat{F}(-1/\tau,x) = e^{\pi i \tau x^2},
\]
and thus $F$ and $\widehat{F}$ must satisfy this functional equation, in addition to
those in \eqref{eq:add2}.

In fact, these three functional equations turn out to be almost all we need to
obtain a working interpolation basis. Lemma~\ref{lemma:reduction} is stated somewhat
informally, but it can be made precise.

\begin{lemma} \label{lemma:reduction}
If there exists a function $F$ such that $F$ and $\widehat{F}$ satisfy these
three functional equations and certain analyticity and growth bounds, then
Theorem~\ref{theorem:interpolateR1} follows.
\end{lemma}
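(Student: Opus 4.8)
The plan is to reverse the derivation that produced the three functional equations in the first place. Given a function $F(\tau,x)$ with the stated properties, one reads off candidate basis functions $a_n(x)$ as the Fourier coefficients in the expansion $F(\tau,x) = \sum_{n \ge 0} a_n(x) e^{n\pi i \tau}$, which makes sense because of the periodicity $F(\tau+2,x) = F(\tau,x)$; the growth bounds as $\Im \tau \to \infty$ are what guarantee that only nonnegative powers of $e^{\pi i \tau}$ appear and that the $a_n$ decay appropriately in $n$. One similarly extracts $\widehat{a}_n$ from $\widehat F$, and the analyticity/growth hypotheses are designed so that $\widehat{a}_n$ really is the Fourier transform of $a_n$ and both are even Schwartz functions. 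So the first step is bookkeeping: from $F$ produce the $a_n$ and verify they are even Schwartz functions that are Fourier transforms of each other in the claimed way.

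Next I would verify that the $a_n$ do interpolate. The key is the functional equation $F(\tau,x) + (-i\tau)^{-1/2} \widehat F(-1/\tau,x) = e^{\pi i \tau x^2}$ together with the two periodicities. Applying this identity for $f(x) = e^{\pi i \tau x^2}$ — whose Fourier transform is $(-i\tau)^{-1/2} e^{\pi i(-1/\tau)x^2}$ by \eqref{eq:complexGaussian} — shows directly that the interpolation formula of Theorem~\ref{theorem:interpolateR1} holds for every complex Gaussian $e^{\pi i \tau x^2}$ with $\tau \in \Hyp$. The point is then that Gaussians are rich enough: an arbitrary even Schwartz function $f$ can be written as a superposition (an integral over $\tau$, or over purely imaginary $\tau$ after a contour argument) of such Gaussians, because the Gaussian $e^{\pi i \tau x^2}$ for $\tau$ on the imaginary axis is essentially a heat kernel and these span a dense subspace of even Schwartz functions in the relevant topology. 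Because both sides of the interpolation formula are continuous in $f$ for this topology — here one uses the Schwartz bounds on the $a_n$ and the claimed absolute convergence — the formula extends from Gaussians to all even Schwartz $f$.

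The main obstacle is precisely the analytic machinery suppressed in the phrase ``certain analyticity and growth bounds.'' Making the argument precise requires: (i) quantitative decay of $a_n$ and $\widehat a_n$ in both $x$ and $n$, strong enough that $\sum_n f(\sqrt n) a_n(x)$ and $\sum_n \widehat f(\sqrt n)\widehat a_n(x)$ converge absolutely and define Schwartz functions of $x$; (ii) control of $F(\tau,x)$ as $\tau \to$ a real point, so that the Gaussian-superposition representation of $f$ can be interchanged with the sum defining $F$; and (iii) a density/approximation argument showing that equality on the family of Gaussians propagates to all of Schwartz space. None of these is conceptually deep once the right estimates are in hand, but they are exactly where the real work lies, and they are the reason the lemma is stated as holding ``if $F$ satisfies certain growth bounds'' rather than unconditionally — the burden is shifted to constructing an $F$ with good enough bounds, which is the subject of what follows.
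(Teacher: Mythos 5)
Your proposal follows essentially the same plan as the paper's sketch: read off $a_n$ and $\widehat{a}_n$ as Fourier coefficients of $F$ and $\widehat{F}$ (with the growth bounds ensuring no negative-index terms and Schwartz regularity), observe that the third functional equation is precisely the interpolation formula for the complex Gaussian $e^{\pi i \tau x^2}$, and then extend to all even Schwartz functions by density of the Gaussians together with continuity of the error functional $\Lambda_x$. The paper is a bit more explicit that the continuity step reduces to showing the Schwartz seminorms of $a_n$, $\widehat{a}_n$ grow at most polynomially in $n$, which is obtained by writing $a_n$ as a contour integral of $F(\tau,\cdot)$ via Fourier orthogonality; your hedge between ``integral superposition of Gaussians'' and ``density plus continuity'' is resolved in the paper in favor of the latter.
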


\begin{proof}[Sketch of proof]
The idea behind the proof is surprisingly simple. If $F$ and $\widehat{F}$
are sufficiently well-behaved, then the functional equations $F(\tau+2,x) =
F(\tau,x)$ and $\widehat{F}(\tau+2,x) = \widehat{F}(\tau,x)$ imply that they
can be expanded as Fourier series. We can define the functions $a_n$ to be
the Fourier coefficients of $F(\tau,x)$, and $\widehat{a}_n$ must be the
corresponding coefficient of $\widehat{F}(\tau, x)$, as in the original
definitions of $F$ and $\widehat{F}$ above. The fact that there are no terms
with $n<0$ amounts to boundedness as $\Im(\tau) \to \infty$, and the
constraint that $a_0 = \widehat{a}_0$ can be phrased similarly (namely that
$F(\tau,x)-\widehat{F}(\tau,x)$ decays as $\Im(\tau) \to \infty$).

Now the third functional equation says that
\[
 \sum_{n=0}^\infty a_n(x) e^{n \pi i  \tau} +  \sum_{n=0}^\infty \widehat{a}_n(x) (-i \tau)^{-1/2} e^{n \pi i  (-1/\tau)} = e^{\pi i \tau x^2},
\]
which becomes
\[
\sum_{n=0}^\infty a_n(x) f\big(\sqrt{n}\big) + \sum_{n=0}^\infty \widehat{a}_n(x) \widehat{f}\big(\sqrt{n}\big) = f(x)
\]
if we set $f(x) = e^{\pi i \tau x^2}$. In other words, it states that the
interpolation theorem holds when $f$ is a complex Gaussian.

One can show that complex Gaussians span a dense subspace of the even
Schwartz functions. To complete the proof, all we need to show is that for
each $x \in \R$, the functional $\Lambda_x$ that takes an even Schwartz
function $f$ to
\[
\Lambda_x(f) = f(x) - \sum_{n=0}^\infty f\big(\sqrt{n}\big) \, a_n(x) - \sum_{n=0}^\infty \widehat{f}\big(\sqrt{n}\big) \, \widehat{a}_n(x)
\]
is continuous, so that vanishing on a dense subspace implies vanishing
everywhere. The topology on the space of Schwartz functions is defined by a
family of seminorms, and proving that $\Lambda_x$ is continuous requires
proving that the seminorms of $a_n$ and $\widehat{a}_n$ grow at most
polynomially as $n \to \infty$. To prove the required bounds, we can use
Fourier orthogonality to write $a_n(x)$ and $\widehat{a}_n(x)$ as integrals
in $\tau$ of $F(\tau,x)$ and $\widehat{F}(\tau,x)$, respectively, and then
use suitable growth bounds for $F$ and $\widehat{F}$ to bound the seminorms
of these integrals.
\end{proof}

We can now imitate the construction of $a_0$ from $\theta(z)^3$ in
Lemma~\ref{lemma:a0} to obtain the generating functions $F$ and $\widehat{F}$
explicitly. To do so, we will replace $\theta(z)^3$ with the functions $K$
and $\widehat{K}$ from Proposition~\ref{prop:kernels}, which is again stated
informally. Note that $\widehat{K}$ is not a Fourier transform of $K$;
instead, this notation is simply mnemonic, since $\widehat{K}$ will be used
to construct $\widehat{F}$.

\begin{proposition} \label{prop:kernels}
There exist meromorphic functions $K$ and $\widehat{K}$ on $\Hyp \times \Hyp$
that satisfy the following conditions for all $\tau,z \in \Hyp$:
\begin{enumerate}
\item \label{item1} $K(\tau+2,z) = K(\tau,z)$ and $\widehat{K}(\tau+2,z) = \widehat{K}(\tau,z)$,

\item \label{item2} $K(\tau,z+2) = K(\tau,z)$ and $\widehat{K}(\tau,z+2) = \widehat{K}(\tau,z)$,

\item \label{item3} $K(-1/\tau,z) = -(-i \tau)^{1/2} \widehat{K}(\tau,z)$,

\item \label{item4} $K(\tau,-1/z) = (-i z)^{3/2} \widehat{K}(\tau, z)$,

\item \label{item5} $z \mapsto K(\tau,z)$ and $z \mapsto
    \widehat{K}(\tau,z)$ have poles only when $z$ is in the
    $\Gamma_\theta$-orbit\break 
    of $\tau$,

\item \label{item6} all their poles are simple poles,

\item \label{item7} the residue of $z \mapsto K(\tau,z)$ at $z=\tau$ is $1/(2\pi i)$ and at $z=-1/\tau$ is $0$ \textup{(}in other words, there is no pole there\textup{)}, 

\item \label{item8} the residue of $z \mapsto \widehat{K}(\tau,z)$ at $z=\tau$ is $0$, and

\item \label{item9} $K$ and $\widehat{K}$ satisfy certain growth bounds, which we will not discuss here.
\end{enumerate}
\end{proposition}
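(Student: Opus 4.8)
The plan is to build $K$ and $\widehat K$ as explicit linear combinations of weight-$3/2$ modular forms for $\Gamma_\theta$, with coefficients that are meromorphic functions of $\tau$, chosen so that the residues at $z=\tau$ match the prescription in items~(\ref{item6})--(\ref{item8}). The guiding principle is that conditions~(\ref{item3})--(\ref{item4}) are exactly the two generating relations for $\Gamma_\theta$ acting on the $z$-variable, so once we know how $z\mapsto K(\tau,z)$ transforms under $z\mapsto z+2$ and $z\mapsto -1/z$, we know it on the whole orbit; the role of $\widehat K$ is just to absorb the automorphy factor under the inversion. Concretely, I would look for something of the shape
\[
K(\tau,z)=\frac{\theta(z)^3\,\varphi(\tau)+\theta(z)^{3}\psi(z)\,\eta(\tau)+\cdots}{J(z)-J(\tau)},
\]
where $\theta(z)^3$ supplies the correct weight $3/2$ in $z$ (mirroring its use in Lemma~\ref{lemma:a0}), a suitable Hauptmodul $J$ for $\Gamma_\theta$ (the $\lambda$-function, essentially $\theta^4$-ratios) supplies a simple pole precisely on the $\Gamma_\theta$-orbit of $\tau$ via $J(z)=J(\tau)$, and the $\tau$-dependent numerator is engineered to cancel the spurious pole at $z=-1/\tau$ demanded by item~(\ref{item7}) and to make the residue at $z=\tau$ equal $1/(2\pi i)$ after accounting for $J'$. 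One then \emph{defines} $\widehat K(\tau,z):=(-iz)^{-3/2}K(\tau,-1/z)$, which makes item~(\ref{item4}) hold by fiat, and the work is to check items~(\ref{item1})--(\ref{item3}).

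**Key steps, in order.** First, fix the function-theoretic scaffolding: identify a Hauptmodul $J$ for $\Gamma_\theta$ and the finite-dimensional space $M_{3/2}(\Gamma_\theta)$ (resp.\ the meromorphic modular forms of that weight with controlled poles), and record how $J$ and a basis of that space transform under $z\mapsto z+2$ and $z\mapsto -1/z$; this uses only \eqref{eq:thetatransformations} and standard modular-forms bookkeeping. Second, write the ansatz for $K(\tau,z)$ as (modular form of weight $3/2$ in $z$, with a simple zero structure)$/(J(z)-J(\tau))$ with undetermined $\tau$-coefficients, and impose item~(\ref{item2}) ($z$-periodicity, automatic from periodicity of $\theta^3$, $J$) and item~(\ref{item5}) (poles only on the $\Gamma_\theta$-orbit of $\tau$, automatic since $J(z)=J(\tau)$ cuts out exactly that orbit and $\theta(z)^3$ is holomorphic and nonvanishing on $\Hyp$). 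Third, compute the residue of $z\mapsto K(\tau,z)$ at $z=\tau$ — it is (numerator at $z=\tau$)$/J'(\tau)$ — and at $z=-1/\tau$, and solve the resulting two linear conditions in $\tau$ for the free coefficients so that items~(\ref{item6})--(\ref{item8}) hold; this pins down $K$. Fourth, define $\widehat K(\tau,z):=(-iz)^{-3/2}K(\tau,-1/z)$ so that item~(\ref{item4}) is immediate, and verify item~(\ref{item3}) by substituting $\tau\mapsto -1/\tau$ into the explicit formula for $K$ and using the weight-$3/2$ transformation of the $\tau$-coefficients together with the branch bookkeeping for $(-i\tau)^{1/2}$ (the same $\pm1$-continuity-at-$i$ argument as in the proof of Lemma~\ref{lemma:a0}). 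Fifth, check item~(\ref{item1}) ($\tau$-periodicity): since the only $\tau$-dependence enters through $J(\tau)$ and the modular coefficients, $\tau$-periodicity follows from periodicity of those ingredients — but one must confirm that no extra pole in $z$ was introduced and that the residue normalization is $\tau$-periodic. Finally, assemble the growth bounds of item~(\ref{item9}) from the cusp behavior of $\theta$, $J$, and the chosen basis forms.

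**The main obstacle** is the third step: getting the residue data in item~(\ref{item7}) exactly right while simultaneously maintaining \emph{all} the transformation laws. The condition ``residue $1/(2\pi i)$ at $z=\tau$'' and ``no pole at $z=-1/\tau$'' are two constraints on the numerator that must be met by a modular form of the \emph{fixed} weight $3/2$, whose space is small; it is not obvious a priori that the space is rich enough, and one may be forced to enlarge it by allowing the numerator itself to have a controlled pole (e.g.\ at a cusp or at an elliptic-type point of $\Gamma_\theta$) or to use a weakly holomorphic form with a prescribed principal part in $q=e^{\pi i z}$. Reconciling that enlargement with the requirement in item~(\ref{item5}) that $z\mapsto K(\tau,z)$ have \emph{no other poles} is the delicate balancing act, and it is exactly here that Viazovska's insight — that the right replacement for $\theta^3$ is a cleverly chosen quasimodular or weakly holomorphic combination whose poles are forced to land on the orbit of $\tau$ and nowhere else — does the real work. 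Everything else (periodicity, the definition of $\widehat K$, the growth estimates) is essentially bookkeeping with \eqref{eq:thetatransformations} and the geometry of $\Gamma_\theta$ depicted in Figure~\ref{fig:poles}.
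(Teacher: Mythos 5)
Your framework — $\theta(z)^3$ for weight, a Hauptmodul $J$ for $\Gamma_\theta$ in the denominator to place the poles on the orbit of $\tau$, and a numerator tuned to the residue conditions — is exactly the scaffolding the paper uses, and your idea of defining $\widehat{K}(\tau,z) := (-iz)^{-3/2}K(\tau,-1/z)$ so that item~\eqref{item4} holds by fiat is a legitimate reorganization of what the paper does by writing both functions explicitly. But your ``main obstacle'' paragraph misidentifies where the difficulty lies and, as a result, the plan stalls exactly where the construction needs content. You speculate that the numerator must come from the small holomorphic space $M_{3/2}(\Gamma_\theta)$ and that one may be forced into weakly holomorphic forms with prescribed principal parts at cusps; in fact no such enlargement is needed, and the resolution is much more concrete. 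The missing ingredient is the modular lambda function $\lambda$, with $\lambda(z+2)=\lambda(z)$ and $\lambda(-1/z)=1-\lambda(z)$, from which one sets $h := 1-2\lambda$ (so $h(z+2)=h(z)$ and $h(-1/z)=-h(z)$) and $J := \lambda(1-\lambda)/16 = (1-h^2)/64$. The paper's numerator, divided by $\theta(\tau)\theta(z)^3$, is then just a \emph{polynomial} in the weight-zero functions $h(\tau),h(z)$:
\[
K(\tau,z) = \theta(\tau)\theta(z)^3\,\frac{1-h(\tau)h(z)}{4\bigl(h(\tau)-h(z)\bigr)},
\qquad
\widehat{K}(\tau,z) = \theta(\tau)\theta(z)^3\,\frac{1+h(\tau)h(z)}{4\bigl(h(\tau)+h(z)\bigr)},
\]
and $\widehat{K}$ is visibly the $h(\tau)\mapsto -h(\tau)$ twin of $K$, which is why items~\eqref{item3} and~\eqref{item4} come out with the stated signs (this is precisely the sign trick $h(-1/z)=-h(z)$, which your sketch never invokes). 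With this formula, the zero of the numerator at $z=-1/\tau$ cancels the zero of the denominator there, giving the ``no pole at $z=-1/\tau$'' half of item~\eqref{item7} automatically, and the residue at $z=\tau$ reduces, via $J' = (1-2\lambda)\lambda'/16$, to $\theta(\tau)^4\lambda(\tau)(1-\lambda(\tau))/(2\lambda'(\tau))$; the normalization $1/(2\pi i)$ in item~\eqref{item7} is then the classical identity $\lambda' = \pi i\,\theta^4\lambda(1-\lambda)$, not a property you get to engineer by choosing free coefficients. So the residue requirement is not a linear system to be solved within a mysteriously small space — it is a check that a single canonical candidate passes, thanks to an identity you would need to know or derive. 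Two smaller remarks: your worry about ``no extra pole'' is handled cleanly by the $h$-form, which shows the poles of $z\mapsto K(\tau,z)$ and $z\mapsto\widehat{K}(\tau,z)$ lie on the $\Gamma(2)$-orbits of $\tau$ and $-1/\tau$ respectively; and the $\theta(\tau)$ prefactor (which you tacitly absorb into unknown coefficients) is what makes the weight in $\tau$ come out to $1/2$ in item~\eqref{item3}, so it should be displayed rather than hidden.
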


The motivation behind the transformation laws in
Proposition~\ref{prop:kernels} is that they generalize how $\theta(z)^3$
transforms, and we'll see that they perfectly describe what we need to obtain
$F$ and $\widehat{F}$ as integrals of $K$ and $\widehat{K}$. At first glance
the most mysterious aspect may be the poles, which did not occur for
$\theta(z)^3$. We'll see below that the poles lead to the inhomogeneous term
$e^{\pi i \tau x^2}$ in the functional equation
\[
F(\tau,x) + (-i \tau)^{-1/2} \widehat{F}(-1/\tau,x) = e^{\pi i \tau x^2}.
\]

Before we examine how to use $K$ and $\widehat{K}$ to construct $F$ and
$\widehat{F}$, we will take a look at how Proposition~\ref{prop:kernels} is
proved.

\begin{proof}[Sketch of proof]
The functions $K$ and $\widehat{K}$ can be described explicitly in terms of
modular forms, using three ingredients: the theta function $\theta$, the
modular function $\lambda$, and a Hauptmodul (principal modular function) $J$
for the group $\Gamma_\theta$.

We have already been using $\theta$, and $\lambda$ is a similar analytic
function on $\Hyp$ that dates back to the 19th century. For our purposes, its
key properties will be how $\Gamma_\theta$ acts on~it, namely
\[
\lambda(z+2) = \lambda(z) \qquad\text{and}\qquad \lambda(-1/z) = 1-\lambda(z).
\]
Note that it is not quite invariant under $\Gamma_\theta$. We define $J(z)$
to be $\lambda(z)(1-\lambda(z))/16$, so that $J(z)$ is invariant under both
generators of $\Gamma_\theta$; i.e.,
\[
J(z+2) = J(z) \qquad\text{and}\qquad J(-1/z) = J(z).
\]
Then it turns out that $J$ generates the function field of the quotient of
$\Hyp$ by the action of $\Gamma_\theta$ (this quotient has genus~$0$), and
$J(z) = J(\tau)$ if and only if $z$ and $\tau$ are in the same orbit of
$\Gamma_\theta$.

Using these tools, we can guess much of what $K(\tau,z)$ and
$\widehat{K}(\tau,z)$ should look like. Conditions~\eqref{item3}
and~\eqref{item4} suggest that these functions should have factors of
$\theta(\tau) \theta(z)^3$ to get the correct weights for the transformation
laws. Conditions~\eqref{item4} and~\eqref{item5} imply that they should be
given by $1/(J(z)-J(\tau))$ times something holomorphic, and the signs in
\eqref{item3} and~\eqref{item4} can be obtained using $1 - 2\lambda(-1/z) =
-(1-2\lambda(z))$.

In fact, we can take
\[
K(\tau,z) = \theta(\tau) \theta(z)^3 \frac{J(z)(1-2\lambda(\tau)) + J(\tau)(1-2\lambda(z))}{4(J(z) - J(\tau))}
\]
and
\[
\widehat{K}(\tau,z) = \theta(\tau) \theta(z)^3 \frac{J(z)(1-2\lambda(\tau)) - J(\tau)(1-2\lambda(z))}{4(J(z) - J(\tau))},
\]
and fairly routine computations show that conditions \eqref{item1} through~\eqref{item9}
hold. The functions $K$ and $\widehat{K}$ turn out to be uniquely determined
by these conditions, but we will not verify that here, to avoid having to
state the conditions more carefully and deal with residues and growth bounds.

It's worth noting that one can simplify some of the verification by writing
$K$ and $\widehat{K}$ in terms of the function $h := 1-2\lambda$ via
\[
K(\tau,z) = \theta(\tau) \theta(z)^3 \frac{1-h(\tau) h(z)}{4(h(\tau)-h(z))}
\]
and
\[
\widehat{K}(\tau,z) = \theta(\tau) \theta(z)^3 \frac{1+h(\tau) h(z)}{4(h(\tau)+h(z))}.
\]
For example, $h$ is a Hauptmodul for a subgroup of $\Gamma_\theta$ called
$\Gamma(2)$, and these formulas show that the poles of $z \mapsto K(\tau,z)$
and $z \mapsto \widehat{K}(\tau,z)$ occur only on the $\Gamma(2)$-orbits of
$\tau$ and $-1/\tau$, respectively.
\end{proof}

All that remains is to use $K$ and $\widehat{K}$ to construct functions $F$
and $\widehat{F}$ for use in Lemma~\ref{lemma:reduction}. To do so, we can
imitate Lemma~\ref{lemma:a0}. As a first attempt to produce $F$ from $K$, we
could try setting
\begin{equation} \label{eq:Fdef}
F(\tau,x) = \int_{-1}^1 K(\tau, z) e^{\pi i z x^2} \, dz.
\end{equation}
However, this formula can't possibly hold for all $\tau$, because the
integrand has poles on the $\Gamma_\theta$-orbit of $\tau$, and as one varies
$\tau$, sometimes these poles cross the contour of integration. Instead, we
can use this definition only on subsets of $\Hyp$ for which the poles avoid
the contour of integration. As shown in Figure~\ref{fig:poles}, one such
subset consists of all the points $\tau \in \Hyp$ such that $\tau$ has
distance strictly greater than~$1$ from~$2\Z$. For such $\tau$, we define
$F(\tau,x)$ by \eqref{eq:Fdef}; we will deal with other values of $\tau$ via
analytic continuation in Lemma~\ref{lemma:analyticcont}.

To obtain $\widehat{F}(\tau,x)$ we can take the Fourier transform of
$F(\tau,x)$ in $x$. For $\tau$ strictly further than distance $1$ from $2\Z$,
we can use the semicircular contour, and almost exactly the same computation
as in the proof of Lemma~\ref{lemma:a0} shows that
\begin{align*}
\widehat{F}(\tau,x) &= \int_{-1}^1 K(\tau, z) (-i z)^{-1/2} e^{\pi i (-1/z) x^2} \, dz\\
&= -  \int_{-1}^1 K(\tau,-1/z) (i/z)^{-1/2} e^{\pi i z x^2} z^{-2} \, dz\\
 &= -  \int_{-1}^1 \widehat{K}(\tau, z) (-i z)^{3/2} (i/z)^{-1/2}z^{-2} e^{\pi i z x^2} \, dz\\
 &=  \int_{-1}^1 \widehat{K}(\tau, z) e^{\pi i z x^2} \, dz.
\end{align*}

\begin{lemma} \label{lemma:analyticcont}
The functions $\tau \mapsto F(\tau,x)$ and $\tau \mapsto \widehat{F}(\tau,x)$
can be analytically continued to all of $\Hyp$, and they satisfy the
functional equations $F(\tau+2,x) = F(\tau,x)$, $\widehat{F}(\tau+2,x) =
\widehat{F}(\tau,x)$, and $F(\tau,x) + (-i \tau)^{-1/2}
\widehat{F}(-1/\tau,x) = e^{\pi i \tau x^2}$.
\end{lemma}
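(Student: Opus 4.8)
The plan is to establish both functional equations and the analytic continuation by working first on the region~$R$ consisting of those $\tau \in \Hyp$ whose distance from~$2\Z$ exceeds~$1$, on which $F(\tau,x)$ is given by the semicircular integral~\eqref{eq:Fdef} and $\widehat{F}(\tau,x)$ by the analogous integral $\int_{-1}^1 \widehat{K}(\tau,z)\,e^{\pi i z x^2}\,dz$ established just above, and then to propagate everything to all of~$\Hyp$. On~$R$ the periodicities $F(\tau+2,x) = F(\tau,x)$ and $\widehat{F}(\tau+2,x) = \widehat{F}(\tau,x)$ are immediate from condition~\eqref{item1} of Proposition~\ref{prop:kernels}, since $R$ is invariant under $\tau \mapsto \tau+2$ and the contour does not move. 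That $\tau \mapsto F(\tau,x)$ is holomorphic on~$R$ for each fixed real~$x$ follows by differentiating under the integral sign, which is legitimate thanks to the growth bounds in condition~\eqref{item9} and the mild behavior of the integrand near the endpoints $z = \pm 1$, exactly as in the proof of Lemma~\ref{lemma:a0}; the same applies to~$\widehat{F}$, which moreover is literally the Fourier transform in~$x$ of~$F(\tau,\cdot)$.

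For the analytic continuation I would drag the contour past the poles. For each~$\tau$ the function $z \mapsto K(\tau,z)$ is meromorphic with all poles confined to the discrete set $\Gamma_\theta \cdot \tau$ by condition~\eqref{item5}, and for $\tau \in R$ the semicircle from~$-1$ to~$1$ misses this set. Given any $\tau_0 \in \Hyp$, one joins a point of~$R$ to~$\tau_0$ by a path and carries the contour along, deforming it rel endpoints so as to stay clear of the moving poles; the resulting integral defines $F(\cdot,x)$ near~$\tau_0$, and since $\Hyp$ is simply connected, Cauchy's theorem makes this independent of all choices, so $F(\cdot,x)$ extends holomorphically to all of~$\Hyp$, with $\widehat{F}(\cdot,x)$ extending alongside it as the Fourier transform in~$x$. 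Once this is done, the two periodicity identities, already known on the open set~$R$, hold on all of~$\Hyp$ by the identity theorem.

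The crux is the third identity, $F(\tau,x) + (-i\tau)^{-1/2}\widehat{F}(-1/\tau,x) = e^{\pi i \tau x^2}$, where the poles in Proposition~\ref{prop:kernels} finally do their work; by the identity theorem it suffices to prove it for $\tau \in R$. Applying condition~\eqref{item3} with~$\tau$ replaced by~$-1/\tau$ and sorting out the branch of the square root yields the clean relation $\widehat{K}(-1/\tau,z) = -(-i\tau)^{1/2}K(\tau,z)$, valid for all $z \in \Hyp$. Hence $z \mapsto \widehat{K}(-1/\tau,z)$ has exactly the poles of $z \mapsto K(\tau,z)$, so by condition~\eqref{item7} it has a simple pole at $z = \tau$ with residue $-(-i\tau)^{1/2}/(2\pi i)$. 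For $\tau \in R$ we have $|\tau| > 1$, so this pole lies \emph{outside} the unit semicircle; but as $\widehat{F}$ is continued from~$R$ to the point~$-1/\tau$, the relevant moving pole---the pole of $z \mapsto \widehat{K}(\mu,z)$ at $z = -1/\mu$, which for $\mu \in R$ lies \emph{inside} the unit semicircle because $|\mu| > 1$---crosses the contour exactly once. Tracking the orientation of that crossing gives
\[
\widehat{F}(-1/\tau,x) = \int_{-1}^1 \widehat{K}(-1/\tau,z)\,e^{\pi i z x^2}\,dz \;-\; 2\pi i \operatorname{Res}_{z=\tau}\!\big(\widehat{K}(-1/\tau,z)\,e^{\pi i z x^2}\big).
\]
The integral on the right equals $-(-i\tau)^{1/2}F(\tau,x)$ by the relation above, while $\operatorname{Res}_{z=\tau}\!\big(\widehat{K}(-1/\tau,z)\,e^{\pi i z x^2}\big)$ equals $-(-i\tau)^{1/2}e^{\pi i \tau x^2}/(2\pi i)$ by condition~\eqref{item7}; substituting, multiplying by~$(-i\tau)^{-1/2}$, and adding~$F(\tau,x)$, the two copies of $F(\tau,x)$ cancel and exactly $e^{\pi i \tau x^2}$ remains.

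I expect the main obstacle to be the bookkeeping hidden in the last display: one must verify that during the continuation of~$\widehat{F}$ from~$R$ to~$-1/\tau$ precisely one pole of $z \mapsto \widehat{K}(\mu,z)$ sweeps across the semicircle, identify it as the pole at~$z = \tau$, and pin down the sign of the crossing. This rests on understanding how the $\Gamma_\theta$-orbit of~$\tau$ sits relative to the semicircle and its $\Gamma_\theta$-translates---in effect the fact, visible in Figure~\ref{fig:poles}, that~$R$ behaves like a fundamental domain whose walls are copies of the semicircle---and it must be combined with the analytic control from condition~\eqref{item9} near the endpoints $z = \pm 1$, so that the contour deformations, the convergence of all the integrals, and the differentiations under the integral sign are justified.
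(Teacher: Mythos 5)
Your argument is correct and isolates the same essential mechanism---the residue theorem producing the Gaussian term $e^{\pi i \tau x^2}$ from the simple pole of $z \mapsto K(\tau,z)$ at $z = \tau$, via the relation $\widehat{K}(-1/\tau,z) = -(-i\tau)^{1/2}K(\tau,z)$ and condition~\eqref{item7}---but the packaging is genuinely different from the paper's. You continue $F(\cdot,x)$ and $\widehat{F}(\cdot,x)$ to \emph{all} of $\Hyp$ up front by global contour deformation, appealing to simple connectedness to exclude monodromy, and then verify the third functional equation for $\tau \in R$ itself, which requires carrying $\widehat{F}$ all the way from $R$ into the disk to $-1/\tau$ and tracking the single pole that sweeps across the semicircle in the process; the identity theorem then finishes the job. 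The paper works more locally: it continues $F$ and $\widehat{F}$ only to a small neighborhood of $\overline{S}$, taking $\tau$ just below the semicircle so that $-1/\tau$ is just above it, and applies the residue theorem to the difference of integrals over two nearby deformed contours $\mathcal{C}_\tau$ and $\mathcal{C}'_{-1/\tau}$ sandwiching $z = \tau$; it then bootstraps the definition of $F$ and $\widehat{F}$ to the remaining $\Gamma_\theta$-triangles using the functional equations themselves, with the tree structure of the triangle adjacencies (noted in the paper's footnote) guaranteeing consistency. The paper's local-then-bootstrap route keeps every contour deformation close to the unit circle, which sidesteps the harder analytic issue of threading a contour past poles that may crowd toward the cusps at $z = \pm 1$ when $\tau$ is deep in the interior of the disk; your route pays that analytic cost once but cleanly separates the continuation from the proof of the identities. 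Your sign bookkeeping in the crossing formula is right (the deformed contour sits above $z = \tau$, the semicircle below, giving the $-2\pi i$ prefactor), and the cancellation of the two $F(\tau,x)$ terms goes through as you describe.
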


\begin{proof}[Sketch of proof]
Let $S = \{ \tau \in \Hyp: |\tau-2n| > 1 \text{ for all $n \in \Z$}\}$. We
have defined $F(\tau,x)$ and $\widehat{F}(\tau,x)$ for $\tau \in S$, and the
functional equations
\[
F(\tau+2,x) = F(\tau,x) \qquad\text{and}\qquad \widehat{F}(\tau+2,x) = \widehat{F}(\tau,x)
\]
for $\tau \in S$ are immediate consequences of
\[
K(\tau+2,z) = K(\tau,z) \qquad\text{and}\qquad \widehat{K}(\tau+2,z) = \widehat{K}(\tau,z).
\]
To prove the lemma, it will suffice to analytically continue  $\tau \mapsto
F(\tau,x)$ and $\tau \mapsto \widehat{F}(\tau,x)$ to some open neighborhood
of the closure of $S$ in $\Hyp$, such that the continuations satisfy
\[
F(\tau,x) + (-i \tau)^{-1/2} \widehat{F}(-1/\tau,x) = e^{\pi i \tau x^2}
\]
whenever $\tau$ and $-1/\tau$ are both in this neighborhood. Then we can use
the functional equations to extend these functions to all the hyperbolic
triangles in Figure~\ref{fig:poles}.\footnote{Note that as we pass from a
triangle to the adjacent triangles, we can never reach the same triangle via
two different paths of adjacencies, and thus we don't need to worry about
inadvertently defining a multivalued function of $\tau$.}

We can now use the information about poles and residues in
Proposition~\ref{prop:kernels}. When we analytically continue $F(\tau,x)$ to
$\tau$ just below the semicircle from~$-1$ to~$1$, the only relevant pole of
$z \mapsto K(\tau,z)$ is at $z=\tau$, since $-1/\tau$ is the only other
nearby point in the $\Gamma_\theta$-orbit of $\tau$, and there is no pole at
that point. We can set
\begin{equation} \label{eq:Fdeform}
F(\tau,x) = \int_{\mathcal{C}_\tau} K(\tau,z) e^{\pi i z x^2} \, dz,
\end{equation}
where $\mathcal{C}_\tau$ is a deformation of the semicircle to form a contour
from~$-1$ to~$1$ that passes below $\tau$, so that $\tau$ never lies on the
contour.

Similarly, we can analytically continue $\widehat{F}(\tau,x)$ to just below
the semicircle via
\[
\widehat{F}(\tau,x) = \int_{\mathcal{C}'_\tau} \widehat{K}(\tau,z) e^{\pi i z x^2} \, dz,
\]
where this time there is no pole at $x=\tau$, and the condition is that the
contour $\mathcal{C}'_\tau$ stays above the pole of $z \mapsto
\widehat{K}(\tau,z)$ at $z=-1/\tau$.

Now we can prove the functional equation
\[
F(\tau,x) + (-i \tau)^{-1/2} \widehat{F}(-1/\tau,x) = e^{\pi i \tau x^2}
\]
as follows
when $\tau$ is just below the semicircle. The identity
\[
K(-1/\tau,z) = -(-i \tau)^{1/2} \widehat{K}(\tau,z),
\]
or equivalently
\[
\widehat{K}(-1/\tau,z) = -(-i \tau)^{1/2} K(\tau,z),
\]
shows that
\begin{equation} \label{eq:Fhatdeform2}
\begin{split}
(-i \tau)^{-1/2} \widehat{F}(-1/\tau,x) &= \int_{\mathcal{C}'_{-1/\tau}} (-i \tau)^{-1/2} \widehat{K}(-1/\tau,z) e^{\pi i z x^2} \, dz\\
&= \int_{\mathcal{C}'_{-1/\tau}} - (- i \tau)^{-1/2} (-i \tau)^{1/2} K(\tau,z) e^{\pi i z x^2} \, dz\\
&= - \int_{\mathcal{C}'_{-1/\tau}} K(\tau,z) e^{\pi i z x^2} \, dz.
\end{split}
\end{equation}
Combining \eqref{eq:Fdeform} and \eqref{eq:Fhatdeform2} with the residue
theorem implies that
\[
F(\tau,x) + (-i \tau)^{-1/2} \widehat{F}(-1/\tau,x)
\]
is $2\pi i$ times the sum of the residues of all the poles of $z \mapsto
K(\tau,z) e^{\pi i z x^2}$ between $\mathcal{C}_\tau$ and
$\mathcal{C}'_{-1/\tau}$. The only pole that could lie between these contours
is at $z=\tau$, since $z \mapsto K(\tau,z)$ has no pole at $z=-1/\tau$, and
by construction it does lie between them. The residue of  $z \mapsto
K(\tau,z) e^{\pi i z x^2}$ at $z = \tau$ is $e^{\pi i \tau x^2}/(2\pi i)$,
and so
\[
F(\tau,x) + (-i \tau)^{-1/2} \widehat{F}(-1/\tau,x) = e^{\pi i \tau x^2},
\]
as desired.
\end{proof}

Lemma~\ref{lemma:analyticcont} shows that $F(\tau,x)$ and
$\widehat{F}(\tau,x)$ can be analytically continued to all $\tau \in \Hyp$ in
such a way that they satisfy the three functional equations. That is almost
everything we need to prove Theorem~\ref{theorem:interpolateR1} using
Lemma~\ref{lemma:reduction}. However, to apply this lemma we need to verify
certain growth conditions for $F(\tau,x)$ and $\widehat{F}(\tau,x)$ as $\tau$
approaches the real line. Verifying these conditions is the most technical
part of the proof of the interpolation theorem, and we will not examine it
here. In short, the verification combines bounds on $K$ and $\widehat{K}$
with careful accounting of how quickly the inhomogeneous terms from the third
functional equation can accumulate during the analytic continuation. Once
this is done, the proof of Theorem~\ref{theorem:interpolateR1} is complete.

This proof is satisfyingly thorough in that it not only proves the
interpolation formula but also provides plenty of additional information.
For example, we can obtain explicit formulas for the interpolation basis
$a_0$, $a_1$, \dots\  by using the identity $K(\tau+2,z) = K(\tau,z)$ to write $K$
as a Fourier series
\[
K(\tau,z) = \sum_{n=0}^\infty \varphi_n(z) e^{n \pi i \tau}
\]
when $\Im(\tau)$ is large.
Then
\[
a_n(x) = \int_{-1}^1 \varphi_n(z) e^{\pi i z x^2} \, dz,
\]
which generalizes Lemma~\ref{lemma:a0}. Similarly, the Fourier coefficients
of $\widehat{K}$ yield formulas for $\widehat{a}_n$.

On the other hand, some aspects of the proof are quite delicate. For example,
it is very sensitive to the form $\sqrt{n}$ of the interpolation points.
Specifically, the proof of the functional equation
\[
F(\tau,x) + (-i \tau)^{-1/2} \widehat{F}(-1/\tau,x) = e^{\pi i \tau x^2}
\]
depends on the fact that the complex Gaussian $x \mapsto e^{\pi i \tau x^2}$
equals $e^{n \pi i \tau}$ when evaluated at the interpolation point $x =
\sqrt{n}$. If we replaced $\sqrt{n}$ with other interpolation points $r_n$,
then the Fourier series for $F(\tau,x)$ would have to be replaced with
\[
\sum_{n=0}^\infty a_n(x) e^{r_n^2 \pi i \tau},
\]
and it would no longer satisfy $F(\tau+2,x) = F(\tau,x)$ if the values
$r_n^2$ are not integers. That would disrupt the algebraic mechanism behind
the proof.

Much remains to be understood regarding generalizations of the
Radchenko--Viazovska theorem and how Fourier interpolation fits into a broader
picture. One significant line of work \cites{BHMRV,BHMRV2} connects Fourier
interpolation to uniqueness theory for the Klein--Gordon equation
\cites{HM-R1,HM-R2,HM-R3}. Other noteworthy papers examine the density of
possible interpolation points \cites{K,S,KNS,A} and whether they can be perturbed
\cite{RS}, interpolation formulas using zeros of zeta and $L$-functions
\cite{BRS}, and extensions to nonradial functions \cites{St,RSt,RadSt,A}.
Perhaps the most surprising development so far has been a paper on sphere
packing and quantum gravity \cite{HMR}, which shows the equivalence of linear
programming bounds with the spinless modular bootstrap bound for free bosons
in conformal field theory, and which furthermore shows that certain bases of
special functions constructed by Maz\'a\v{c} and Paulos \cite{MP} for the
conformal bootstrap can be transformed into Fourier interpolation bases.

\section*{Acknowledgments}

I am grateful to Rupert Li and Danylo Radchenko for helpful feedback on a draft of this
article.

\section*{About the author}

Henry Cohn is a senior principal researcher at Microsoft Research New England and an
adjunct professor of mathematics at MIT. His primary research interests are in discrete
mathematics, with connections to computer science and physics.

\begin{bibdiv}
\begin{biblist}

\bib{A}{misc}{
   author={Adve, Anshul},
   title={Density criteria for Fourier uniqueness phenomena in $\R^d$},
   series={Preprint, \arXiv{2306.07475}},
   date={2023}
}

\bib{ACHLT}{article}{
   author={Afkhami-Jeddi, Nima},
   author={Cohn, Henry},
   author={Hartman, Thomas},
   author={de Laat, David},
   author={Tajdini, Amirhossein},
   title={High-dimensional sphere packing and the modular bootstrap},
   journal={J. High Energy Phys.},
   date={2020},
   number={12},
   pages={Paper No. 066, 44},
   issn={1126-6708},
   review={\MR{4239386}},
   doi={10.1007/jhep12(2020)066}
}

\bib{BHMRV}{article}{
   author={Bakan, Andrew},
   author={Hedenmalm, Haakan},
   author={Montes-Rodr\'{\i}guez, Alfonso},
   author={Radchenko, Danylo},
   author={Viazovska, Maryna},
   title={Fourier uniqueness in even dimensions},
   journal={Proc. Natl. Acad. Sci. USA},
   volume={118},
   date={2021},
   number={15},
   pages={Paper No. 2023227118, 4},
   issn={0027-8424},
   review={\MR{4294062}},
   doi={10.1073/pnas.2023227118}
}

\bib{BHMRV2}{misc}{
   author={Bakan, Andrew},
   author={Hedenmalm, Haakan},
   author={Montes-Rodr\'{\i}guez, Alfonso},
   author={Radchenko, Danylo},
   author={Viazovska, Maryna},
   title={Hyperbolic Fourier series},
   series={Preprint, \arXiv{2110.00148}},
   date={2021}
}

\bib{B}{article}{
   author={Best, M. R.},
   title={Binary codes with a minimum distance of four},
   journal={IEEE Trans. Inform. Theory},
   volume={26},
   date={1980},
   number={6},
   pages={738--742},
   issn={0018-9448},
   review={\MR{596287}},
   doi={10.1109/TIT.1980.1056269}
}

\bib{BRS}{article}{
   author={Bondarenko, Andriy},
   author={Radchenko, Danylo},
   author={Seip, Kristian},
   title={Fourier interpolation with zeros of zeta and $L$-functions},
   journal={Constr. Approx.},
   volume={57},
   date={2023},
   number={2},
   pages={405--461},
   issn={0176-4276},
   review={\MR{4577389}},
   doi={10.1007/s00365-022-09599-w},
}

\bib{BHS}{article}{
   author={Borodachov, S. V.},
   author={Hardin, D. P.},
   author={Saff, E. B.},
   title={Asymptotics of best-packing on rectifiable sets},
   journal={Proc. Amer. Math. Soc.},
   volume={135},
   date={2007},
   number={8},
   pages={2369--2380},
   issn={0002-9939},
   review={\MR{2302558}},
   doi={10.1090/S0002-9939-07-08975-7},
}

\bib{BCK}{article}{
   author={Bourgain, Jean},
   author={Clozel, Laurent},
   author={Kahane, Jean-Pierre},
   title={Principe d'Heisenberg et fonctions positives \textup{(French, with English and French summaries)}},
   journal={Ann. Inst. Fourier (Grenoble)},
   volume={60},
   date={2010},
   number={4},
   pages={1215--1232},
   issn={0373-0956},
   review={\MR{2722239}},
   doi={10.5802/aif.2552},
}

\bib{C1}{article}{
   author={Cohn, Henry},
   title={A conceptual breakthrough in sphere packing},
   journal={Notices Amer. Math. Soc.},
   volume={64},
   date={2017},
   number={2},
   pages={102--115},
   issn={0002-9920},
   review={\MR{3587715}},
   doi={10.1090/noti1474},
}

\bib{C2}{misc}{
   author={Cohn, Henry},
   title={The work of Maryna Viazovska},
   series={Fields medal laudatio},
   date={2022},
   doi={10.4171/ICM2022/213},
   note={\arXiv{2207.06913}, DOI \href{https://doi.org/10.4171/ICM2022/213}{10.4171/ICM2022/213}}
}

\bib{CE}{article}{
   author={Cohn, Henry},
   author={Elkies, Noam},
   title={New upper bounds on sphere packings I},
   journal={Ann. of Math. (2)},
   volume={157},
   date={2003},
   number={2},
   pages={689--714},
   issn={0003-486X},
   review={\MR{1973059}},
   doi={10.4007/annals.2003.157.689},
}

\bib{CG}{article}{
   author={Cohn, Henry},
   author={Gon\c{c}alves, Felipe},
   title={An optimal uncertainty principle in twelve dimensions via modular
   forms},
   journal={Invent. Math.},
   volume={217},
   date={2019},
   number={3},
   pages={799--831},
   issn={0020-9910},
   review={\MR{3989254}},
   doi={10.1007/s00222-019-00875-4},
}

\bib{CKMRV1}{article}{
   author={Cohn, Henry},
   author={Kumar, Abhinav},
   author={Miller, Stephen D.},
   author={Radchenko, Danylo},
   author={Viazovska, Maryna},
   title={The sphere packing problem in dimension $24$},
   journal={Ann. of Math. (2)},
   volume={185},
   date={2017},
   number={3},
   pages={1017--1033},
   issn={0003-486X},
   review={\MR{3664817}},
   doi={10.4007/annals.2017.185.3.8},
}

\bib{CKMRV2}{article}{
   author={Cohn, Henry},
   author={Kumar, Abhinav},
   author={Miller, Stephen D.},
   author={Radchenko, Danylo},
   author={Viazovska, Maryna},
   title={Universal optimality of the $E_8$ and Leech lattices and
   interpolation formulas},
   journal={Ann. of Math. (2)},
   volume={196},
   date={2022},
   number={3},
   pages={983--1082},
   issn={0003-486X},
   review={\MR{4502595}},
   doi={10.4007/annals.2022.196.3.3},
}

\bib{CdLS}{misc}{
   author={Cohn, Henry},
   author={de Laat, David},
   author={Salmon, Andrew},
   title={Three-point bounds for sphere packing},
   series={Preprint, \arXiv{2206.15373}},
   date={2022}
}

\bib{CS}{article}{
   author={Conway, J. H.},
   author={Sloane, N. J. A.},
   title={What are all the best sphere packings in low dimensions?},
   journal={Discrete Comput. Geom.},
   volume={13},
   date={1995},
   number={3-4},
   pages={383--403},
   issn={0179-5376},
   review={\MR{1318784}},
   doi={10.1007/BF02574051}
}

\bib{SPLAG}{book}{
   author={Conway, J. H.},
   author={Sloane, N. J. A.},
   title={Sphere packings, lattices and groups},
   series={Grundlehren der mathematischen Wissenschaften [Fundamental
   Principles of Mathematical Sciences]},
   volume={290},
   edition={3},
   note={With additional contributions by E. Bannai, R. E. Borcherds, J.
   Leech, S. P. Norton, A. M. Odlyzko, R. A. Parker, L. Queen and B. B.
   Venkov, DOI \href{https://doi.org/10.1007/978-1-4757-6568-7}{10.1007/978-1-4757-6568-7}},
   publisher={Springer-Verlag, New York},
   date={1999},
   pages={lxxiv+703},
   isbn={0-387-98585-9},
   review={\MR{1662447}},
   doi={10.1007/978-1-4757-6568-7}
}

\bib{dCDV}{misc}{
   author={de Courcy-Ireland, Matthew},
   author={Dostert, Maria},
   author={Viazovska, Maryna},
   title={Six-dimensional sphere packing and linear programming},
   series={Preprint, \arXiv{2211.09044}},
   date={2022}
}

\bib{D}{article}{
   author={Delsarte, P.},
   title={Bounds for unrestricted codes, by linear programming},
   journal={Philips Res. Rep.},
   volume={27},
   date={1972},
   pages={272--289},
   issn={0031-7918},
   review={\MR{314545}},
}

\bib{E}{book}{
   author={Ebeling, Wolfgang},
   title={Lattices and codes},
   series={Advanced Lectures in Mathematics},
   edition={3},
   publisher={Springer Spektrum, Wiesbaden},
   date={2013},
   pages={xvi+167},
   isbn={978-3-658-00359-3},
   isbn={978-3-658-00360-9},
   review={\MR{2977354}},
   doi={10.1007/978-3-658-00360-9},
   note={A course partially based on lectures by Friedrich Hirzebruch, DOI \href{https://doi.org/10.1007/978-3-658-00360-9}{10.1007/978-3-658-00360-9}} 
}

\bib{F}{article}{
   author={Fejes, L.}, 
   title={\"{U}ber einen geometrischen Satz \textup{(German)}},
   journal={Math. Z.},
   volume={46},
   date={1940},
   pages={83--85},
   issn={0025-5874},
   review={\MR{1587}},
   doi={10.1007/BF01181430},
}

\bib{GOS}{article}{
   author={Gon\c{c}alves, Felipe},
   author={Oliveira e Silva, Diogo},
   author={Steinerberger, Stefan},
   title={Hermite polynomials, linear flows on the torus, and an uncertainty
   principle for roots},
   journal={J. Math. Anal. Appl.},
   volume={451},
   date={2017},
   number={2},
   pages={678--711},
   issn={0022-247X},
   review={\MR{3624763}},
   doi={10.1016/j.jmaa.2017.02.030},
}

\bib{H}{article}{
   author={Hales, Thomas C.},
   title={A proof of the Kepler conjecture},
   journal={Ann. of Math. (2)},
   volume={162},
   date={2005},
   number={3},
   pages={1065--1185},
   issn={0003-486X},
   review={\MR{2179728}},
   doi={10.4007/annals.2005.162.1065},
}

\bib{Hplus}{article}{
   author={Hales, Thomas},
   author={Adams, Mark},
   author={Bauer, Gertrud},
   author={Dang, Tat Dat},
   author={Harrison, John},
   author={Le Truong Hoang},
   author={Kaliszyk, Cezary},
   author={Magron, Victor},
   author={McLaughlin, Sean},
   author={Nguyen, Tat Thang},
   author={Nguyen, Quang Truong},
   author={Nipkow, Tobias},
   author={Obua, Steven},
   author={Pleso, Joseph},
   author={Rute, Jason},
   author={Solovyev, Alexey},
   author={Ta, Thi Hoai An},
   author={Tran, Nam Trung},
   author={Trieu, Thi Diep},
   author={Urban, Josef},
   author={Vu, Ky},
   author={Zumkeller, Roland},
   title={A formal proof of the Kepler conjecture},
   journal={Forum Math. Pi},
   volume={5},
   date={2017},
   pages={e2, 29},
   review={\MR{3659768}},
   doi={10.1017/fmp.2017.1},
}

\bib{HMR}{article}{
   author={Hartman, Thomas},
   author={Maz\'{a}\v{c}, Dalimil},
   author={Rastelli, Leonardo},
   title={Sphere packing and quantum gravity},
   journal={J. High Energy Phys.},
   date={2019},
   number={12},
   pages={048, 66},
   issn={1126-6708},
   review={\MR{4075697}},
   doi={10.1007/jhep12(2019)048},
}

\bib{HM-R1}{article}{
   author={Hedenmalm, Haakan},
   author={Montes-Rodr\'{\i}guez, Alfonso},
   title={Heisenberg uniqueness pairs and the Klein-Gordon equation},
   journal={Ann. of Math. (2)},
   volume={173},
   date={2011},
   number={3},
   pages={1507--1527},
   issn={0003-486X},
   review={\MR{2800719}},
   doi={10.4007/annals.2011.173.3.6},
}

\bib{HM-R2}{article}{
   author={Hedenmalm, Haakan},
   author={Montes-Rodr\'{\i}guez, Alfonso},
   title={The Klein-Gordon equation, the Hilbert transform, and dynamics of
   Gauss-type maps},
   journal={J. Eur. Math. Soc. (JEMS)},
   volume={22},
   date={2020},
   number={6},
   pages={1703--1757},
   issn={1435-9855},
   review={\MR{4092897}},
   doi={10.4171/jems/954},
}

\bib{HM-R3}{article}{
   author={Hedenmalm, Haakan},
   author={Montes-Rodr\'{\i}guez, Alfonso},
   title={The Klein-Gordon equation, the Hilbert transform and Gauss-type
   maps: $H^{\infty}$ approximation},
   journal={J. Anal. Math.},
   volume={144},
   date={2021},
   number={1},
   pages={119--190},
   issn={0021-7670},
   review={\MR{4361892}},
   doi={10.1007/s11854-021-0173-4}
}

\bib{Hig}{article}{
   author={Higgins, J. R.},
   title={Five short stories about the cardinal series},
   journal={Bull. Amer. Math. Soc. (N.S.)},
   volume={12},
   date={1985},
   number={1},
   pages={45--89},
   issn={0273-0979},
   review={\MR{766960}},
   doi={10.1090/S0273-0979-1985-15293-0}
}

\bib{K}{article}{
   author={Kulikov, Aleksei},
   title={Fourier interpolation and time-frequency localization},
   journal={J. Fourier Anal. Appl.},
   volume={27},
   date={2021},
   number={3},
   pages={Paper No. 58, 8},
   issn={1069-5869},
   review={\MR{4273648}},
   doi={10.1007/s00041-021-09861-y},
}

\bib{KNS}{misc}{
   author={Kulikov, Aleksei},
   author={Nazarov, Fedor},
   author={Sodin, Mikhail},
   title={Fourier uniqueness and non-uniqueness pairs},
   series={Preprint, \arXiv{2306.14013}},
   date={2023}
}

\bib{dLV}{article}{
   author={de Laat, David},
   author={Vallentin, Frank},
   title={A breakthrough in sphere packing: the search for magic functions},
   journal={Nieuw Arch. Wiskd. (5)},
   volume={17},
   date={2016},
   number={3},
   pages={184--192},
   issn={0028-9825},
   review={\MR{3643686}},
   note={Includes an interview with Henry Cohn, Abhinav Kumar, Stephen D.
   Miller and Maryna Viazovska},
}

\bib{L}{misc}{
   author={Li, Rupert},
   title={Dual linear programming bounds for sphere packing via discrete reductions},
   series={Preprint, \arXiv{2206.09876}},
   date={2022}
}

\bib{MP}{article}{
   author={Maz\'{a}\v{c}, Dalimil},
   author={Paulos, Miguel F.},
   title={The analytic functional bootstrap. Part II. Natural bases for the
   crossing equation},
   journal={J. High Energy Phys.},
   date={2019},
   number={2},
   pages={163},
   issn={1126-6708},
   review={\MR{3925259}},
   doi={10.1007/jhep02(2019)163},
}

\bib{RS}{misc}{
   author={Ramos, Jo\~{a}o P. G.},
   author={Sousa, Mateus},
   title={Perturbed interpolation formulae and applications},
   series={Preprint, \arXiv{2005.10337}},
   date={2020}
}

\bib{RSt}{article}{
   author={Ramos, Jo\~{a}o P. G.},
   author={Stoller, Martin},
   title={Perturbed Fourier uniqueness and interpolation results in higher
   dimensions},
   journal={J. Funct. Anal.},
   volume={282},
   date={2022},
   number={12},
   pages={Paper No. 109448, 34},
   issn={0022-1236},
   review={\MR{4403065}},
   doi={10.1016/j.jfa.2022.109448},
}

\bib{RadSt}{article}{
   author={Radchenko, Danylo},
   author={Stoller, Martin},
   title={Fourier non-uniqueness sets from totally real number fields},
   journal={Comment. Math. Helv.},
   volume={97},
   date={2022},
   number={3},
   pages={513--553},
   issn={0010-2571},
   review={\MR{4468993}},
   doi={10.4171/cmh/538},
}

\bib{RV}{article}{
   author={Radchenko, Danylo},
   author={Viazovska, Maryna},
   title={Fourier interpolation on the real line},
   journal={Publ. Math. Inst. Hautes \'{E}tudes Sci.},
   volume={129},
   date={2019},
   pages={51--81},
   issn={0073-8301},
   review={\MR{3949027}},
   doi={10.1007/s10240-018-0101-z},
}

\bib{S}{misc}{
   author={Sardari, Naser Talebizadeh},
   title={Higher Fourier interpolation on the plane},
   series={Preprint, \arXiv{2102.08753}},
   date={2021}
}

\bib{St}{article}{
   author={Stoller, Martin},
   title={Fourier interpolation from spheres},
   journal={Trans. Amer. Math. Soc.},
   volume={374},
   date={2021},
   number={11},
   pages={8045--8079},
   issn={0002-9947},
   review={\MR{4328691}},
   doi={10.1090/tran/8440},
}

\bib{Tho}{book}{
   author={Thompson, Thomas M.},
   title={From error-correcting codes through sphere packings to simple
   groups},
   series={Carus Mathematical Monographs},
   volume={21},
   publisher={Mathematical Association of America, Washington, DC},
   date={1983},
   pages={xiv+228},
   isbn={0-88385-023-0},
   review={\MR{0749038}},
}

\bib{Thu1}{article}{
   author={Thue, A.},
   title={Om nogle geometrisk-taltheoretiske Theoremer}, 
   journal={Forhandlingerne ved de Skandinaviske Naturforskeres},
   volume={14},
   date={1892},
   pages={352--353}
}

\bib{Thu2}{article}{
   author={Thue, Axel},
   title={\"{U}ber die dichteste Zusammenstellung von kongruenten Kreisen in der Ebene},
   journal={Skrifter udgivne af Videnskabs-Selskabet i Christiania. I. Mathematisk-Naturvidenskabelig Klasse},
   volume={1},
   date={1910},
   pages={1--9}
}

\bib{V}{article}{
   author={Viazovska, Maryna S.},
   title={The sphere packing problem in dimension 8},
   journal={Ann. of Math. (2)},
   volume={185},
   date={2017},
   number={3},
   pages={991--1015},
   issn={0003-486X},
   review={\MR{3664816}},
   doi={10.4007/annals.2017.185.3.7},
}

\bib{V2}{article}{
   author={Viazovska, Maryna},
   title={Sharp sphere packings},
   conference={
      title={Proceedings of the International Congress of
      Mathematicians---Rio de Janeiro 2018. Vol. II. Invited lectures},
   },
   book={
      publisher={World Sci. Publ., Hackensack, NJ},
   },
   isbn={978-981-3272-91-0},
   isbn={978-981-3272-87-3},
   date={2018},
   pages={455--466},
   review={\MR{3966775}},
   doi={10.1142/9789813272880\_0063}
}

\bib{V3}{article}{
   author={Viazovska, Maryna},
   title={Almost impossible $E_8$ and Leech lattices},
   journal={Eur. Math. Soc. Mag.},
   number={121},
   date={2021},
   pages={4--8},
   issn={2747-7894},
   review={\MR{4400365}},
   doi={10.4171/mag-47},
}

\end{biblist}
\end{bibdiv}

\end{document}